\newtheorem{theorem}{Theorem}[section]
\newtheorem{corollary}[theorem]{Corollary}
\newtheorem{lemma}[theorem]{Lemma}
\newtheorem{proposition}[theorem]{Proposition}
\theoremstyle{definition}
\newtheorem{definition}{Definition}[section]
\newtheorem{remark}{Remark}[section]
\def\R{{\mathbb R}}
\def\N{{\mathbb N}}
\def\qqfa{\quad{\rm for\ all}\quad}
\def\<{\langle}
\def\>{\rangle}
\def\dist{{\rm dist}}
\def\AA{{\mathbb A}}
\def\qq#1{\qquad\mbox{#1}\qquad}
\def\be#1{\begin{equation}\label{#1}}
\def\ee{\end{equation}}
\newcommand{\eval}[2][\right]{\relax
  \ifx#1\right\relax \left.\fi#2#1\rvert}
\title[Finite-dimensional attractors]{On finite-dimensional attractors of homeomorphisms}
\author{James C.\ Robinson}
\address{Mathematics Institute, University of Warwick, Coventry, CV4 7AL. UK.}
\email{j.c.robinson@warwick.ac.uk}
\author{Jaime J. S\'{a}nchez-Gabites}
\address{Departamento de Econom\'{i}a cuantitativa. Facultad de Ciencias Econ\'{o}micas y Empresariales. Universidad Aut\'{o}noma de Madrid, 28049 Cantoblanco (Madrid), SPAIN}
\email{JaimeJ.Sanchez@uam.es}
\thanks{JCR is currently an EPSRC Leadership Fellow, grant \# EP/G007470/1. JJSG is supported by a MICINN grant (MTM 2009-07030).}
\begin{document}

\begin{abstract}
Let $E$ be a linear space and suppose that $\AA$ is the global attractor of either (i) a homeomorphism $F:E\rightarrow E$ or (ii) a semigroup $S(\cdot)$ on $E$ that is injective on $\AA$. In both cases $\AA$ has trivial shape, and the dynamics on $\AA$ can be described by a homeomorphism $F:\AA\rightarrow \AA$ (in the second case we set $F=S(t)$ for some $t>0$). If the topological dimension of $\AA$ is finite we show that for any $\epsilon>0$ there is an embedding $e:\AA\rightarrow\R^k$, with $k\sim{\rm dim}(A)$, and a (dynamical) homeomorphism $f:\R^k\rightarrow\R^k$ such that $F$ is conjugate to $f$ on $\AA$ (i.e.\ $F|_\AA=e^{-1}\circ f\circ e$) and $f$ has an attractor $A_f$ with $e(\AA)\subset A_f\subset N(e(\AA),\epsilon)$. In other words, we show that the dynamics on $\AA$ is essentially finite-dimensional.

We characterise subsets of $\R^n$ that can be the attractors of homeomorphisms as cellular sets, give elementary proofs of various topological results connected to Borsuk's theory of shape and cellularity in Euclidean spaces, and prove a controlled homeomorphism extension theorem.
\end{abstract}

\maketitle
\tableofcontents


\section{Introduction}

One can recast many of the important equations of mathematical physics within the framework of infinite-dimensional dynamical systems, i.e.\ dynamical systems evolving in an infinite-dimensional phase space. The theory of such systems has been systematically developed over the last three decades, and is well covered in the monographs by Babiin \& Vishik (1992), Chepyzhov \& Vishik (2002), Chueshov (2002), Hale (1988), Ladyzhenskaya (1991), Robinson (2001), and Temam (1988). One of the most striking results in this theory is that in many interesting examples the long-time dynamics can be captured by a finite-dimensional  subset of the ambient (infinite-dimensional) phase space, the `global attractor'.

However, this statement says nothing a priori about the dynamics restricted to the attractor, and it is natural to ask in what sense (if any) these dynamics are themselves finite-dimensional. This question was first posed in this generality by Eden et al.\ (1994), and subsequently discussed by Robinson (1999) and Romanov (2000).
Ideally one would construct a finite-dimensional ordinary differential equation whose dynamics reproduces those on the attractor. This is certainly possible if the original system possesses an inertial manifold (Foias et al., 1988), but the existence of such an object requires restrictive conditions (a `spectral gap condition' on the linear part of the equation) that prevent the theory being applicable to many important examples, such as the two-dimensional Navier--Stokes equations.

 However, the construction of such an ODE seems very difficult, essentially because it would require a bi-Lipschitz embedding of the attractor into a Euclidean space (this can be slightly weakened to allow logarithmic corrections, see Pinto de Moura et al., 2011). An intrinsic characterisation of sets that admit such an embedding is a major open problem in the theory of metric spaces (see Heinonen, 2003, for example), and there are examples due to Eden et al. (2011) that show that such an embedding (even with a logarithmic correction) is in general not possible for the attractors of infinite-dimensional dynamical systems.

 Thus in this paper we aim to reproduce not the continuous dynamics on the attractor, but the discrete dynamics that come from considering the time $T$ map of the flow, for some fixed $T>0$. Perhaps a little more elegantly, we also consider the discrete problem from the outset, where the attractor arises from the iteration of some given homoeomorphism.

At the heart of our construction is a classical theorem due to Menger (1926) and N\"obeling (1931) (see also Hurewicz \& Wallman, 1941; Robinson, 2011), which guarantees that any finite-dimensional compact metric space can be embedded into a finite-dimensional Euclidean space of comparable dimension. The following theorem states this more precisely.

\begin{theorem}[Menger--N\"obeling]\label{MN}
Let $(X,d)$ be a compact metric space of dimension $\le d$. Then the set of all homeomorphisms of $X$ onto a subset of $\R^{2d+1}$ is dense in $C^0(X,\R^{2d+1})$. [In fact the homeomorphisms form a dense $G_\delta$ in $C^0(X,\R^{2d+1})$.]
\end{theorem}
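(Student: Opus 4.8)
The plan is to package everything through the Baire category theorem in the complete metric space $C^0(X,\R^{2d+1})$ equipped with the uniform metric, so that the two assertions of the theorem collapse into one: it suffices to exhibit the set of embeddings as a dense $G_\delta$. Since $X$ is compact, every continuous injection $X\to\R^{2d+1}$ is automatically a homeomorphism onto its image, so we may work with injective maps. For $n\in\N$ let $E_n$ be the set of $f\in C^0(X,\R^{2d+1})$ for which there is $\delta>0$ with $|f(x)-f(y)|\ge\delta$ whenever $d(x,y)\ge 1/n$. A short triangle-inequality estimate shows each $E_n$ is open, and a compactness argument (for injective $f$ the continuous map $(x,y)\mapsto|f(x)-f(y)|$ attains a positive minimum on the compact set $\{d(x,y)\ge 1/n\}$) shows that $\bigcap_n E_n$ is exactly the set of injective maps. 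Everything therefore reduces to proving that each $E_n$ is dense.

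This is where dimension enters. Fix $f\in C^0(X,\R^{2d+1})$ and $\epsilon>0$; I want $g\in E_n$ with $\|f-g\|_\infty<\epsilon$. Since $\dim X\le d$, the space $X$ admits a finite open cover $\{U_1,\dots,U_m\}$ of order at most $d+1$ (every point lies in at most $d+1$ of the $U_i$) whose mesh is so small that $\operatorname{diam}U_i<1/n$ and, using uniform continuity of $f$, $\operatorname{diam}f(U_i)<\epsilon/2$ for every $i$. Choose a partition of unity $\{\phi_i\}$ subordinate to this cover, pick points $x_i\in U_i$, and then choose $z_1,\dots,z_m\in\R^{2d+1}$ in general position — meaning that any $2d+2$ of them are affinely independent, which is a generic condition and hence achievable — with $|z_i-f(x_i)|<\epsilon/2$. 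Set $g=\sum_i\phi_i z_i$.

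Two verifications remain. First, $\|f-g\|_\infty<\epsilon$: at any point $x$, $g(x)$ is a convex combination of those $z_i$ with $x\in U_i$, and each such $z_i$ lies within $\epsilon$ of $f(x)$ by the triangle inequality through $f(x_i)$. Second, $g\in E_n$: the indices $i$ with $\phi_i(x)>0$ span a simplex $\sigma_x$ of the nerve of the cover with at most $d+1$ vertices, and $g(x)$ lies in the affine image of $\sigma_x$; likewise for $y$. If $g(x)=g(y)$, this common point lies in the affine images of both $\sigma_x$ and $\sigma_y$, whose vertex sets together number at most $2d+2$ and are therefore affinely independent — so those two affine faces meet only in the image of their common combinatorial face, which forces an index $j$ with $x,y\in U_j$ and hence $d(x,y)\le\operatorname{diam}U_j<1/n$. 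Thus $g$ separates every pair at distance $\ge 1/n$, and one more compactness step upgrades this to the uniform bound $\delta$, placing $g$ in $E_n$.

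The only genuinely nontrivial ingredient is the dimension-theoretic fact invoked at the start of the second paragraph: a compact metric space of dimension $\le d$ has arbitrarily fine finite open covers of order $\le d+1$. Depending on one's chosen definition of dimension this is either the definition itself or a standard equivalence (Hurewicz \& Wallman, 1941). Everything else is soft: general position in $\R^{2d+1}$ — the remark that $2d+2$ points in general position there are affinely independent is precisely why the target dimension $2d+1$ is the right one — together with the functional-analytic packaging via Baire's theorem.
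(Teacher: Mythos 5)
The paper does not prove this theorem --- it is quoted as a classical result with pointers to Menger, N\"obeling, and Hurewicz \& Wallman --- so there is no internal proof to compare against; your argument is precisely the standard Baire-category proof from Hurewicz \& Wallman (1941) and is correct. All the essential points are in place: openness of the sets $E_n$ and the identification of $\bigcap_n E_n$ with the injective maps via compactness, and the density of each $E_n$ via a fine cover of order $\le d+1$ pushed through a partition of unity to points in general position, where the affine independence of any $2d+2$ of the $z_i$ in $\R^{2d+1}$ forces equal barycentric coordinates at $x$ and $y$, hence a common index $j$ and $d(x,y)<1/n$.
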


We use this theorem to make a homeomorphic copy $X$ of our original finite-dimensional attractor, along with its dynamics, into some $\R^k$. The main task is to find a way to extend the embedded dynamics from $X$ onto the whole of $\R^k$, and to make the set $X$ an attractor (or as nearly as possible) for the resulting dynamics.

In Section 2 we show that cellularity characterises the global attractors of homemorphisms in Euclidean spaces, based in part on previous work of Garay (1991). In Section 3 we show that global attractors of homeomorphisms, and of continuous time semigroups, have trivial shape, a property that is topologically invariant. We recall in Section 4 the cellularity criterion of McMillan (1964), which allows us to boost a set with trivial shape to one that is cellular by adding an extra dimension to the ambient space. In Section 5 we prove a theorem that provides a controlled extension of a homeomorphism from a compact subset of $\R^k$ to a map on $\R^{2k}$, based on a trick of Klee (1955). Finally we combine these techniques in Section 6 to show that discrete dynamics on finite-dimensional attractors are no more complicated than the dynamics on attractors of homeomorphisms in finite-dimensional spaces.

\section{Attractors and cellularity}

\subsection{Attractors in linear spaces}

Suppose that $E$ is a linear space, and $F:E\to E$ is a continuous map. Then $\AA$ is a \emph{global attractor} for $F$ if
\begin{enumerate}
\item $\AA$ is compact;
\item $F(\AA)=\AA$; and
\item $\AA$ attracts bounded sets, i.e.\ for every bounded subset $B$ of $E$,
$$
\dist(F^n(B),\AA)\to0\qq{as} n\to\infty,
$$
where $\dist(A,B)=\sup_{a\in A}\inf_{b\in B}\|a-b\|$.
\end{enumerate}
Note that if it exists the global attractor is unique and is the minimal closed set that attracts bounded sets (both follow from the fact that if $Z$ attracts bounded sets then $\dist(\AA,Z)=\dist(F^n(\AA),Z)\to0$ as $n\to\infty$).

Condition (3) is equivalent to requiring that for any bounded set $B$ of $E$ and every $\epsilon > 0$ there exists $n_0$ such that $F^n(B) \subset N(X,\epsilon)$ whenever $n \geq n_0$. Here $N(X,\epsilon)$ denotes the set $\{x \in E :\ \dist(x,\AA)<\epsilon\}$. In particular, this implies that for any bounded set $B$ containing $\AA$, the equality $\AA = \bigcap_n F^n(B)$ holds.

The existence of a global attractor is equivalent to the existence of a compact attracting set $K$ (cf.\ the corresponding results for semiflows given by Crauel, 2001).

\begin{theorem}
  The map $F$ has a global attractor $\AA$ if and only if it has a compact attracting set $K$, and in this case
\be{intersection}
\AA=\bigcap_{j=1}^\infty F^j(K).
\ee
\end{theorem}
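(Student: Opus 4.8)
The plan is to prove the two implications separately and then read off \eqref{intersection}. The forward implication needs no work: if a global attractor $\AA$ exists then $\AA$ is itself compact and, by property~(3), attracts every bounded set, so $K=\AA$ is a compact attracting set. Everything of substance is in the converse, and there the one point requiring genuine care is that $E$ is infinite-dimensional, so forward orbits of bounded sets need not be precompact and the usual lemmas on $\omega$-limit sets (which presuppose precompact orbits) cannot be quoted verbatim; instead the attracting hypothesis is used to squeeze every orbit into shrinking neighbourhoods of the compact set $K$.

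Assume then that $K$ is compact and $\dist(F^n(B),K)\to0$ for every bounded $B$. Since $K$ is bounded, so is $C:=\overline{N(K,1)}$, and $\bigcup_{m\ge n}F^m(C)\subset\{x:\dist(x,K)\le\eta_n\}$ where $\eta_n:=\sup_{j\ge n}\dist(F^j(C),K)\downarrow0$. Put $$\AA:=\omega(C)=\bigcap_{n\ge1}\overline{\bigcup_{m\ge n}F^m(C)}.$$ Then $\AA\subset\bigcap_n\{x:\dist(x,K)\le\eta_n\}=K$, so $\AA$ is a closed subset of the compact set $K$ and is therefore compact; it is nonempty because, for any fixed $c\in C$, the points $F^n(c)$ lie within $\eta_n$ of $K$ and hence, by compactness of $K$, accumulate at some point of $\omega(C)=\AA$. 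The invariance $F(\AA)=\AA$ follows by the usual argument for $\omega$-limit sets, the only change being that the convergent subsequence of $F^{m_k-1}(c_k)$ needed to produce a preimage of a given $x=\lim F^{m_k}(c_k)\in\AA$ is extracted not from precompactness of the orbit (which fails here) but from the fact that these points lie within $\eta_{m_k-1}\to0$ of the compact $K$; continuity of $F$ does the rest.

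Next I would verify that $\AA$ attracts bounded sets. First it attracts $C$: were this false there would be $\epsilon>0$, $n_k\to\infty$ and $c_k\in C$ with $\dist(F^{n_k}(c_k),\AA)\ge\epsilon$; but $F^{n_k}(c_k)$ lies within $\eta_{n_k}\to0$ of $K$, so by compactness of $K$ a subsequence converges, and its limit lies in $\omega(C)=\AA$ since the $c_k$ are in $C$ and $n_k\to\infty$ — contradicting $\dist(F^{n_k}(c_k),\AA)\ge\epsilon$. For an arbitrary bounded set $B$ the hypothesis provides $n_0$ with $F^n(B)\subset N(K,1)\subset C$ for all $n\ge n_0$, whence $F^{n_0+m}(B)\subset F^m(C)$ and $\dist(F^{n_0+m}(B),\AA)\le\dist(F^m(C),\AA)\to0$. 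Thus $\AA$ is nonempty, compact, invariant under $F$ and attracts bounded sets: it is a global attractor, and unique, as noted after the definition.

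It remains to identify $\AA$ with $\bigcap_{j\ge1}F^j(K)$ for an arbitrary compact attracting set $K$. Since $K$ attracts the bounded set $\AA$ and $F^n(\AA)=\AA$, we get $\dist(\AA,K)=0$, i.e.\ $\AA\subset K$; applying $F^j$ gives $\AA=F^j(\AA)\subset F^j(K)$ for every $j$, so $\AA\subset\bigcap_{j\ge1}F^j(K)$. Conversely, if $x\in F^j(K)$ for all $j$ then $\dist(x,\AA)\le\dist(F^j(K),\AA)\to0$ because $\AA$ attracts the bounded set $K$, whence $x\in\AA$. This gives \eqref{intersection}. The only real obstacle in all of this is the infinite-dimensional compactness/attraction point highlighted in the first paragraph; the remainder is routine.
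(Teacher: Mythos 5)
Your proof is correct. It shares the paper's basic skeleton---realise the attractor as an $\omega$-limit set contained in $K$, then identify it with $\bigcap_j F^j(K)$---but the two halves are executed differently. For existence, the paper simply cites the standard lemma that, given a compact attracting set, $\omega(B)$ is compact, invariant and attracts $B$ for every bounded $B$, and then forms $\AA$ as the closure of the union of all these $\omega(B)$ before showing $\AA=\omega(K)$; you instead work with the single set $\omega(C)$ for the absorbing neighbourhood $C=\overline{N(K,1)}$ and prove its compactness, nonemptiness, invariance and attraction properties directly, correctly replacing the precompactness of orbits (unavailable in infinite dimensions) by the observation that tails of orbits lie within $\eta_n\to0$ of the compact set $K$, which supplies the convergent subsequences needed at each step. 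This is precisely the content the paper delegates to its citation, so your version is more self-contained; your absorption argument $F^{n_0+m}(B)\subset F^m(C)$ for general bounded $B$ is also clean. For the identity (\ref{intersection}) you argue dynamically: $\AA\subset K$ because $\dist(\AA,K)=\dist(F^n(\AA),K)\to0$ and $K$ is closed, whence $\AA\subset F^j(K)$ by invariance, while $\bigcap_j F^j(K)\subset\AA$ because $\dist(F^j(K),\AA)\to0$ and $\AA$ is closed. The paper instead obtains both inclusions set-theoretically from the formula $\omega(K)=\bigcap_{k\ge0}\overline{\bigcup_{n\ge k}F^n(K)}$ together with invariance of $\omega(K)$. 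The two routes are equally short; yours avoids any appeal to the explicit $\omega$-limit formula at that stage, at the mild cost of invoking the attraction properties already established.
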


\begin{proof}
%
%
%

It is relatively straightforward to show that given the existence of a compact attracting set, for any set $B$ the set
\begin{align}
\omega(B)&:=\bigcap_{k\ge0}\overline{\bigcup_{n\ge k}F^n(B)}\label{omega1}\\
&=\{x\in E:\ x=\lim_{j\to\infty}F^{n_j}(b_j),\ n_j\to\infty,\ b_j\in B\}\label{omega2}
\end{align}
is a subset of $K$ that is compact, invariant, and attracts $B$ (see Hale, 1988, or Theorem 11.3 in Robinson, 2011, for example). We now show that $\omega(K)$ is the global attractor. Consider
\be{Xdef}
\AA=\overline{\bigcup_{B\ \mbox{bounded}}\omega(B)}.
\ee
Since $\omega(B)\subset K$ for every $B$, this is a closed subset of $K$, and so compact. It is clearly invariant since every $\omega(B)$ is invariant, and it attracts every bounded set, so it must be the global attractor since this is unique. It is immediate from (\ref{Xdef}) that $\omega(K)\subseteq\AA$. Since $\AA$ is the minimal closed set that attracts bounded sets, $\AA\subseteq K$ and hence $\AA=\omega(\AA)\subset\omega(K)$.

Finally, to show that (\ref{intersection}) holds, note that $\omega(K)\subseteq K$ and then, since $\omega(K)$ is invariant, it follows from (\ref{omega1}) that
$$
\omega(K)\subseteq\bigcap_{k\ge0}F^k(K),
$$
and
$$
F^k(K)\subset\overline{\bigcup_{n\ge k}F^n(K)}\qquad\Rightarrow\qquad\bigcap_{k=1}^\infty F^k(K)\subseteq\omega(K),
$$
which yields (\ref{intersection}).
\end{proof}

We now show that any global attractor of a \emph{homeomorphism} must be strongly cellular. Recall that a set $X$ is \emph{cellular} in $E$ if
$$
X=\bigcap_{j=1}^\infty C_j,
$$
where the $C_j$ are a decreasing sequence ($C_{j+1}\subset{\rm int}(C_j)$) of {\it cells}, i.e.\ sets homeomorphic to the closed unit ball in $E$ (`topological balls'). $X$ is \emph{strongly cellular} if for any open set $U$ containing $X$ there is an $j$ such that $C_j\subset U$. Cellularity and strong cellularity are equivalent in finite-dimensional spaces because of their local compactness [see Garay (1991) and McCoy (1973)].

%
%

\begin{lemma}\label{Xiscellular}
 If $\AA$ is the global attractor of a homeomorphism $F:E\rightarrow E$, where $E$ is a linear space, then $\AA$ is strongly cellular.
\end{lemma}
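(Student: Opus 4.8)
The plan is to exhibit $\AA$ as the intersection of a decreasing sequence of topological balls in $E$, and to do this using the dynamics of $F$ rather than any topological input about $\AA$ itself. The starting point is the characterisation of the attractor as $\AA = \bigcap_{j\ge1} F^j(K)$ for a suitable compact attracting set $K$; since $F$ is a homeomorphism of $E$, I want to take $K$ not to be an arbitrary compact set but to be a \emph{cell} (topological ball), and then the sets $C_j := F^j(K)$ are automatically cells, being homeomorphic images of a ball under a homeomorphism of $E$. So the two things to check are: (a) one can find a compact attracting set that is actually a topological ball and whose forward images are nested, $C_{j+1}\subset\Int(C_j)$; and (b) the nesting is strong, i.e.\ the $C_j$ eventually enter any open neighbourhood of $\AA$ — but this last point is exactly condition (3)/(the equivalent reformulation) in the definition of global attractor, so (b) is essentially free once (a) is arranged.

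For step (a), first I would use the fact that $\AA$ is compact and $F$-invariant to produce a bounded open neighbourhood $V$ of $\AA$ that is \emph{absorbing} and \emph{positively invariant}, i.e.\ $F(\overline V)\subset V$ and $F^n(B)\subset V$ eventually for every bounded $B$; this is standard (shrink a large absorbing ball forward under $F$ and take a finite union / use uniform continuity near the compact attractor). Such a $V$ with $F(\overline V)\subset V$ gives $\overline V$ as a compact attracting set, and setting $C_0 := \overline V$, $C_j := F^j(\overline V)$ we get $C_{j+1} = F^{j+1}(\overline V) = F^j(F(\overline V)) \subset F^j(V) \subset \Int\big(F^j(\overline V)\big) = \Int(C_j)$, using that $F^j$ is an open map (a homeomorphism of $E$). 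The remaining gap is that $\overline V$ need not be homeomorphic to a ball. To fix this I would replace $V$ by a smaller absorbing neighbourhood that \emph{is} a topological ball: since $\AA$ is compact in the linear space $E$, pick a ball-like neighbourhood (e.g.\ a small closed metric ball, or a convex body, in the relevant finite-dimensional or infinite-dimensional sense — in a normed linear space closed balls are cells) sitting inside $V$ and containing $\AA$ in its interior; then push it forward enough times so that its image lies inside the original absorbing set, and restart the construction from there. Concretely: if $B_0$ is a closed ball with $\AA\subset\Int(B_0)\subset B_0\subset V$, then for large $m$ we have $F^m(\overline V)\subset \Int(B_0)$, hence $F^m(B_0)\subset F^m(\overline V)\subset\Int(B_0)$, so $B_0$ is itself a positively invariant cell and $C_j := F^{jm}(B_0)$ (or simply reindex) is a decreasing sequence of cells with $\bigcap_j C_j = \AA$.

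I expect the main obstacle to be the very last sentence of that paragraph in the generality claimed here: $E$ is only assumed to be a \emph{linear space} (the lemma does not require it to be a normed or even metrizable space), so it is not obvious that $\AA$ has a neighbourhood basis of sets homeomorphic to ``the closed unit ball in $E$'' at all, nor even what that phrase should mean. The cleanest route is to observe that $\AA$ is a compact subset of $E$ and that compactness lets one work inside a nice finite-dimensional (or finite-codimensional affine) slice, or to invoke the standard fact that a compact subset of a (Hausdorff) topological vector space is contained in arbitrarily small neighbourhoods that are cells; alternatively one reduces to the locally compact case and quotes Garay (1991) and McCoy (1973) for the existence of such cellular neighbourhoods, which also delivers the equivalence of cellularity and strong cellularity cited just before the lemma. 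Once a positively invariant cell neighbourhood $C_0\supset\AA$ is in hand, the rest is the routine dynamical argument above: $C_j = F^j(C_0)$ are nested cells whose intersection is $\AA$ by the attractor identity, and strong cellularity follows because $F^j(C_0)\subset N(\AA,\epsilon)$ for all large $j$, which is precisely the attraction property. This completes the proof.
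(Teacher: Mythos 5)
Your proposal is correct and is essentially the paper's own argument: take a closed ball $\bar B(0,R)\supset\AA$, use attraction to find $n$ with $F^n(\bar B(0,R))\subset B(0,R)$, and let $C_j=F^{nj}(\bar B(0,R))$, which are nested cells (images of a ball under the homeomorphisms $F^{nj}$) intersecting in $\AA$ and entering any neighbourhood of $\AA$ by the attraction property. The preliminary construction of an absorbing positively invariant set $V$ and the worry about general linear spaces are unnecessary detours (the paper works with $B(0,R)$ directly, so a norm is implicitly assumed and closed balls are cells by rescaling), but they do not affect the validity of the argument.
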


\begin{proof}
  Choose $R$ sufficiently large that $\AA\subset B(0,R)$. Since $\AA$ is the global attractor, there exists an $n$ such that
  $$
  F^n(\bar{B}(0,R))\subset B(0,R).
  $$
  It follows that $C_j = F^{nj}(\bar{B}(0,R))$ is a decreasing sequence of bounded sets all of which are cells because $F^{nj}$ is a homeomorphism for each $j$. Clearly \[\AA \subseteq \bigcap_{j=1}^\infty C_j\] because $\AA$ is invariant. Since $\AA$ is the global attractor of $F$, given any open $U\supset \AA$ there exists $j$ such that $C_j \subseteq U$. This readily implies that \[\AA = \bigcap_{j=1}^\infty C_j,\] so $\AA$ is strongly cellular.
\end{proof}

\subsection{Attractors in Euclidean spaces}

We now want to show that any cellular subset of $\R^n$ can be the global attractor of some homeomorphism on $\R^n$. We start by showing that any cellular set $X$ in $\R^n$ is `pointlike', i.e.\ $\R^n\setminus X\simeq\R^n\setminus\{0\}$.

\begin{lemma}[Brown, 1960] \label{lem:brown}
If  $X\subset\R^n$ is cellular then there exists a continuous map $g:\R^n\rightarrow\R^n$ such that $g(X)=\{0\}$, and $g|_{\R^n\setminus X}:\R^n\setminus X\rightarrow \R^n\setminus\{0\}$ is a homeomorphism. Moreover, if $X \subset B(0,R)$, then $g$ can be chosen to be the identity outside $B(0,R)$.
\end{lemma}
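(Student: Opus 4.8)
The plan is to construct the map $g$ directly from the nested sequence of cells witnessing cellularity, collapsing each shell between consecutive cells onto the corresponding shell in a model configuration. Write $X=\bigcap_{j\ge1}C_j$ with $C_{j+1}\subset\Int(C_j)$ and each $C_j$ a topological ball; after relabelling we may assume $C_1\subset B(0,R)$, and it is convenient to prepend $C_0=\bar B(0,R)$ so that the construction is the identity on $\R^n\setminus C_0$. On the model side fix concentric balls $D_j=\bar B(0,2^{-j}R)$ with $D_0=\bar B(0,R)$. The idea is to build $g$ as a limit of homeomorphisms $g_m$ that map $C_0\setminus\Int(C_m)$ homeomorphically onto $D_0\setminus\Int(D_m)$ and are the identity outside $C_0$, while squashing $C_m$ into $D_m$. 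Each passage from $g_m$ to $g_{m+1}$ only modifies things inside $C_m$, where one has a homeomorphism of the annular region $C_m\setminus\Int(C_{m+1})$ onto $D_m\setminus\Int(D_{m+1})$ that agrees on the outer boundary with the previous map; such a map exists because both regions are homeomorphic to $S^{n-1}\times[0,1]$ once we know $C_j\setminus\Int(C_{j+1})$ is an annulus, and a boundary identification on one end can be spread across the product. Since the $m$-th modification is supported in $C_m$ and $\diam(C_m)\to0$ in the source while $\diam(D_m)\to0$ in the target, the sequence $g_m$ converges uniformly to a continuous $g:\R^n\to\R^n$ with $g(X)=\{0\}$, $g\equiv\id$ off $B(0,R)$, and $g$ a homeomorphism from $\R^n\setminus X$ onto $\R^n\setminus\{0\}$ (the inverses $g_m^{-1}$ also converge uniformly on compact subsets of $\R^n\setminus\{0\}$).

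The step I expect to be the genuine obstacle is justifying that the shell $C_j\setminus\Int(C_{j+1})$ between two nested cells in $\R^n$ is a product $S^{n-1}\times[0,1]$ — equivalently, that one cell sitting in the interior of another is flat enough that the region between them is a collar. This is false in general (there are wildly embedded cells, and the closure of the region between two concentric wild balls need not be an annulus), so the literal "nested cells" definition of cellularity is too weak to run this argument naively. The standard fix, and the one I would adopt, is to invoke the characterisation of cellular sets due to Brown himself: $X\subset\R^n$ is cellular if and only if for every neighbourhood $U$ of $X$ there is an $n$-cell $C$ with $X\subset\Int(C)\subset C\subset U$, and moreover one may choose the $C_j$ so that each $C_{j+1}$ is \emph{collared} in $C_j$, i.e. there is an embedding $C_j\setminus\Int(C_{j+1})\cong S^{n-1}\times[0,1]$. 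Alternatively one can bypass shells entirely: cellularity gives, for each pair of consecutive cells, a homeomorphism $h_j:\bar B^n\to C_j$, and one builds $g$ by composing the "radial" maps $\bar B^n\to\bar B^n$ that shrink $h_{j+1}^{-1}(\text{stuff})$ appropriately — but keeping these compatible across levels again forces one into the collaring issue. So the crux is really a careful appeal to (or re-proof of) the fact that a cellular set admits a defining sequence of cells each collared in the next.

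Assuming that collaring input, the remaining work is routine: one checks that $g$ is well defined and continuous (uniform convergence of the $g_m$, which is controlled because past level $m$ nothing outside $C_m$ moves and $C_m\to X$, $D_m\to\{0\}$), that $g(X)=\{0\}$ (every point of $X$ lies in all $C_m$, hence its image lies in all $D_m$), that $g(\R^n\setminus X)\subset\R^n\setminus\{0\}$ (a point outside $X$ lies outside some $C_m$, where $g=g_m$ is a homeomorphism onto its image avoiding $D_{m+1}\ni 0$... more precisely avoiding $0$), and that the restriction $g|_{\R^n\setminus X}$ is a homeomorphism onto $\R^n\setminus\{0\}$ by exhibiting a continuous inverse built the same way from the inverse shell maps. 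The "identity outside $B(0,R)$" clause is automatic from the choice $C_0=\bar B(0,R)$ and the fact that every $g_m$ is the identity there. I would present the shell-by-shell construction as the main body of the proof and isolate the collaring fact as a cited lemma (Brown, 1960), since reproving it would be a substantial digression.
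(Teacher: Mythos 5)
Your construction is genuinely different from the paper's, and the difference is exactly where the gap lies. You propose to build $g$ by mapping each shell $C_j\setminus\Int(C_{j+1})$ homeomorphically onto a round annulus, which requires knowing that the region between two consecutive nested cells is a product $S^{n-1}\times[0,1]$. You rightly observe that this fails for an arbitrary defining sequence, but the repair you lean on --- that Brown (1960) shows one may choose the $C_j$ so that each is collared in the previous one --- is not available. No such statement is in Brown's paper, and it could not have been: that a cellular set admits a defining sequence with annular shells is, in effect, a consequence of the generalized Schoenflies theorem, which Brown \emph{deduces from} the present lemma; asserted for bicollared consecutive cells it is the annulus conjecture (open in 1960, and in dimension four until 1982), and for wild cells it is simply false. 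Nor can such a sequence be manufactured elementarily: given a parametrization $k:\bar B^n\to C_j$ one can insert $k(\bar B(0,r))$ so that the shell \emph{outward} to $\partial C_j$ is an annulus, but the shell \emph{inward} to the next cell must be built from a parametrization of $C_{j+1}$ (otherwise the inserted cells intersect in $k(\bar B(0,\lim r_m))\neq X$), and that inward shell is again uncontrolled. So the step you isolate as ``a cited lemma'' is a genuine hole, and citing Brown does not fill it.

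The alternative you dismiss in one line --- composing conjugated radial squeezes --- is the paper's proof, and it does \emph{not} force the collaring issue, because it asks for far less than a shell-to-annulus homeomorphism. At stage $j$ one only needs a homeomorphism of $\bar B(0,R)$ that agrees with the previous composite outside (the image of) the $j$-th cell and squeezes the $(j+1)$-st cell into a small ball about the origin; this comes from conjugating a radial squeeze of $\bar B^n$ by a parametrization of the $j$-th cell, with uniform continuity of the parametrization making the image small, and with no hypothesis at all on how the $(j+1)$-st cell sits inside the $j$-th. The resulting maps $g_j$ stabilize off each cell, converge uniformly, and the limit collapses $X$ to a point while staying injective off $X$ --- precisely your limiting argument, but whose only nontrivial input is the elementary fact that a compact subset of the interior of a cell can be shrunk to arbitrarily small diameter by a homeomorphism supported in that cell. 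Rebuild the proof around that squeezing step rather than around annular shells.
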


\begin{proof}
Let
 $$
X=\bigcap_{j=0}^\infty Q_j,
$$
where each $Q_j$ is a cell, $Q_{j+1}\subset{\rm int}(Q_j)$. Perhaps discarding the first few $Q_j$ we can assume that $Q_1$ is a proper subset of $Q := \overline{B}(0,R)$.

Let $g_1:Q\rightarrow Q$ be a homeomorphism such that $g_1|{\partial Q}={\rm Id}$ and $g_1(Q_1)\subset B(0,\nicefrac{R}{2})$. To see that such a homeomorphism exists, first note that there exists $0<r<R$ such that $Q_1$ is contained in $B(0,r)$. Consider a strictly increasing continuous map $a : [0,R]\rightarrow [0,R]$ such that $a(R) = R$, $a$ is the identity near $0$ and $a(r) = \nicefrac{R}{2}$. Then the map $g_1 : Q\rightarrow Q$ given by
$$
g_1(x)=a(|x|)\frac{x}{|x|}
$$
is a  homeomorphism (the fact that $a$ is the identity near $0$ guarantees that $h$ is continuous) such that $g_1(B(0,r))$, and hence also $g_1(Q_1)$, is contained in $B(0,\nicefrac{R}{2})$.

Now, given $g_{j-1}$, let $g_j:Q\rightarrow Q$ be a homeomorphism such that $$
g_j(x)=g_{j-1}(x)\quad x\in Q\setminus Q_{j-1}\qq{and} g_j(Q_j)\subset B(0,\nicefrac{R}{j+1}).
$$
(That such a homeomorphism exists follows from a similar argument to that given above.) Set
$$
g(x)=\lim_{j\rightarrow\infty}g_j(x);
$$
then $g:Q\rightarrow Q$ is continuous and has $g|_{\partial Q}={\rm Id}$ and $g(X)=\{0\}$ by construction. To see that $g$ is a homeomorphism of $Q\setminus X$ onto $Q\setminus\{0\}$, suppose that $x,y\in Q$ with $x\notin X$, and hence $x\notin Q_j$ for some $j$. Then either

(i) $y\in X$; then $g(y)=0$ and $g(x)\neq0$ since $g_j$ is a homeomorphism and therefore $g(x) = g_j(x) \not\in g_j(Q_j)\subset B(0,\nicefrac{R}{j+1})\ni 0$.

(ii) $y\notin X$; then for some $j$, $x,y\notin Q_j$; since $g(z)=g_j(z)$ for all $z\notin Q_j$ and $g_j:Q\rightarrow Q$ is a homeomorphism, $g(x)\neq g(y)$.

Now simply extend $g$ to $\R^n$ by letting $g$ be the identity on $\R^n\setminus Q$.
\end{proof}

Using this we can show that any cellular $X\subset\R^n$ is the attractor of some homeomorphism. (In fact one can use almost exactly the same proof to define an abstract flow on $\R^n$ that has $X$ as an attractor.)

\begin{theorem}[After Garay, 1991]\label{Garaysthm}
If $X$ is a cellular subset of $\R^n$ then there exists a homeomorphism $h:\R^n\rightarrow\R^n$ such that $h(x)=x$ for all $x\in X$ and $X$ is the global attractor for the dynamical system generated by $h$. Moreover, if $X \subset B(0,R)$, there exists a constant $\rho > 0$ such that $h(B(0,r)) \subset B(0,r-\rho)$ for every $r \geq R$.
\end{theorem}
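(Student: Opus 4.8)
The plan is to use \lemref{lem:brown} to collapse $X$ to a point, push points inward in a controlled way by a radial contraction on the complement, and then pull everything back. First I would invoke \lemref{lem:brown} to obtain a continuous map $g:\R^n\to\R^n$ with $g(X)=\{0\}$, restricting to a homeomorphism $g:\R^n\setminus X\to\R^n\setminus\{0\}$, and chosen to be the identity outside $B(0,R)$ (so in particular $g(B(0,R))\subset B(0,R)$, with $g$ mapping $B(0,R)\setminus X$ homeomorphically onto $B(0,R)\setminus\{0\}$). The idea is now to define $h$ on $\R^n\setminus X$ by conjugating a simple contraction $\varphi$ of $\R^n\setminus\{0\}$ through $g$, namely $h:=g^{-1}\circ\varphi\circ g$, and to set $h|_X=\id$. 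The contraction $\varphi$ should fix every point with $|x|\ge R$ — so that $h$ agrees with $g^{-1}\circ g=\id$ outside $B(0,R)$ and the two pieces of $h$ glue together — while moving every point of $B(0,R)\setminus\{0\}$ strictly closer to the origin, uniformly on compact subsets bounded away from $0$. A convenient choice is a radial map $\varphi(x)=b(|x|)\,x/|x|$ with $b:[0,\infty)\to[0,\infty)$ strictly increasing, $b(s)<s$ for $0<s<R$, $b(s)=s$ for $s\ge R$, and $b$ the identity near $0$ so that $\varphi$ extends continuously over $0$ with $\varphi(0)=0$; then $\varphi$ is a homeomorphism of $\R^n$ fixing $X$-irrelevant data but, crucially, $\varphi^j\to 0$ locally uniformly on $\R^n\setminus\{0\}$.

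Next I would check that $h$ so defined is a homeomorphism of $\R^n$: away from $X$ it is a composition of homeomorphisms, on $X$ it is the identity, and continuity across $\partial(\text{anything})$ reduces to the fact that as $x\to x_0\in X$ we have $g(x)\to 0$, hence $\varphi(g(x))\to 0$, hence $g^{-1}(\varphi(g(x)))\to x_0$ since $g^{-1}$ is continuous on $\R^n\setminus\{0\}$ and sends a neighbourhood of $0$ into a small neighbourhood of $X$ (this last point is exactly the statement that $g$ "collapses $X$ and nothing else", and is where one uses that $X=\bigcap Q_j$ with $g(Q_j)$ shrinking to $\{0\}$). The inverse $h^{-1}=g^{-1}\circ\varphi^{-1}\circ g$ off $X$ and $\id$ on $X$ is continuous by the identical argument. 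By construction $h(x)=x$ for $x\in X$.

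Then I would verify that $X$ is the global attractor of $h$. Invariance $h(X)=X$ is immediate. For attraction: let $B$ be bounded, say $B\subset B(0,r)$ with $r\ge R$; since $\varphi$ fixes the region $|x|\ge R$ and contracts inside, $\varphi^j(g(B(0,r)))$ eventually lies in any preassigned neighbourhood $V$ of $\{0\}$ in $\R^n$ — here one uses $\varphi^j\to 0$ locally uniformly on the compact set $\overline{g(B(0,r))\setminus X}\setminus(\text{small ball})$ together with the fact that points already very close to $0$ stay close. Applying $g^{-1}$, and using that $g^{-1}$ maps small neighbourhoods of $0$ into arbitrarily small neighbourhoods of $X$, gives $h^j(B(0,r))\subset N(X,\epsilon)$ for $j$ large; since $B(0,r)$ is a compact attracting set, the previous theorem (equation \eqref{intersection}) identifies $X=\bigcap_j h^j(B(0,r))$ as the global attractor. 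Finally, for the quantitative statement, choose $b$ additionally so that $b(s)\le s-\delta$ on $[\,R_0,R\,]$ for each fixed $R_0$, translated through $g$ and $g^{-1}$; more simply, since $g=\id$ outside $B(0,R)$ one arranges $h(B(0,r))\subset B(0,r-\rho)$ for all $r\ge R$ directly by taking $\varphi(x)=x-\rho\,x/|x|$ for $|x|\ge R$ blended smoothly into the contraction inside, with $\rho>0$ fixed.

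The main obstacle is the continuity of $h$ across $X$, i.e.\ controlling $g^{-1}$ near the origin: one must show that preimages under $g$ of small balls about $0$ form a neighbourhood basis of $X$. This is not formal — it rests on the specific structure of $g$ from \lemref{lem:brown} (the nested cells $Q_j$ with $g(Q_j)\subset B(0,R/(j+1))$), and it is the same phenomenon that makes "pointlike" sets well-behaved. Everything else — that $\varphi$ is a homeomorphism, that iterates contract to $0$, that the glued map is a homeomorphism of $\R^n$ — is routine once this point is in hand.
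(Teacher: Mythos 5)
Your overall architecture is the same as the paper's: set $h=g^{-1}\circ\varphi\circ g$ off $X$ with $\varphi$ a radial contraction, and $h=\id$ on $X$. But there is a genuine gap exactly at the point you flag as ``the main obstacle'', and the resolution you sketch does not close it. Knowing that preimages under $g$ of small balls about $0$ form a neighbourhood basis of $X$ only gives $\dist(h(y),X)\to 0$ as $y\to x_0\in X$; continuity of $h$ at $x_0$ requires the stronger statement $h(y)\to x_0$. Since $g^{-1}$ does not extend continuously to $0$ (unless $X$ is a single point), the points $g^{-1}(r\xi)$ and $g^{-1}(b(r)\xi)$ on the same ray can lie near entirely different parts of $X$ when $b(r)$ is much smaller than $r$: picture $X$ an arc and a ray whose $g$-preimage oscillates between the two ends of the arc as it approaches $X$. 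A contraction $\varphi$ chosen independently of $g$ can therefore yield a discontinuous $h$. The paper's proof avoids this by coupling the amount of radial contraction to the modulus of continuity of $g^{-1}$ on the annuli $R_k=\{2^{-(k+1)}\le|x|\le 2^{-k}\}$: it chooses $b_k$ so that moving a point of $R_k$ radially by at most $b_k$ moves its $g^{-1}$-image by at most $2^{-k}$, builds the piecewise linear $\beta$ with $\beta(2^{-k})=b_k$, and sets $\alpha(r)=r-\beta(r)$; the resulting displacement bound $|y-h(y)|\le 2^{-k}$ for $y\in g^{-1}(R_k)$ is precisely what makes $h$ continuous at points of $X$. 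Some version of this quantitative coupling is unavoidable and is absent from your proposal.

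Two smaller points. Your conditions on $b$ are internally inconsistent: $b$ cannot be the identity near $0$ and also satisfy $b(s)<s$ for all $0<s<R$; and if $b$ really were the identity near $0$, the attractor of $h$ would be $g^{-1}$ of a small closed ball rather than $X$. (Continuity of $\varphi$ at the origin needs only $b(0)=0$ and continuity of $b$, so that condition should simply be dropped.) Also, as you eventually notice, $\varphi$ must push points inward by a fixed $\rho$ outside $B(0,R)$ to obtain $h(B(0,r))\subset B(0,r-\rho)$ for $r\ge R$; for $g^{-1}$ to fix the resulting image points one should, as the paper does, arrange for $g$ to be the identity outside a slightly smaller ball $B(0,R')$ with $R-\rho>R'$.
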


\begin{proof} After rescaling, we may assume without loss of generality that $R > 1$. Choose $R > R' > 1$ such that $X \subset B(0,R')$ and let $g : \mathbb{R}^n \rightarrow \mathbb{R}^n$ be the map given by Lemma \ref{lem:brown}, with the property that $g$ is the identity outside the ball $B(0,R')$.
%
 Consider the annuli
  $$
  R_k=\{x\in\R^n:\ 2^{-(k+1)}\le|x|\le 2^{-k}\}
  $$
  for $k = 0,1,2,\ldots$ Each $R_k$ is a compact subset of $\R^n \backslash \{0\}$, and so $g^{-1} : \mathbb{R}^n \backslash \{0\} \rightarrow \mathbb{R}^n \backslash X$ is uniformly continuous on each $R_k$; in particular there exists a $b_k$ such that if $x,y\in R_k$ with $x=r\xi$ and $y=s\xi$, where $|\xi|=1$, then
  \begin{equation}\label{hinvsmall}
  |g^{-1}(r\xi)-g^{-1}(s\xi)|\le 2^{-k}
  \end{equation}
  provided that $|r-s|\le b_k$. Redefine $b_k$ (if necessary) to ensure that $R-b_0 > R'$ and $$b_k<\min(b_{k-1}/2,2^{-(k+3)}),$$
  and let $\beta:[0,\infty)\rightarrow[0,\infty)$ be the piecewise linear function with $\beta(0)=0$, $\beta(2^{-k})=b_k$ for each $k\in\N$, and $\beta(r)=b_0$ for $r\geq 1$. Now set $\alpha(r)=r-\beta(r)$ and note that (i) $\alpha(0)=0$, $\alpha(r)\to\infty$ as $r\to\infty$, and $\alpha:[0,\infty)\rightarrow[0,\infty)$ is strictly increasing, so $\alpha$ is a homeomorphism; (ii) $\alpha^k(r)\rightarrow0$ as $k\rightarrow\infty$ for any $r>0$; and (iii) $|r-\alpha(r)|=\beta(r)\rightarrow0$ as $r\rightarrow0$.
  For $x\notin X$ let
  $$
  h(x)=g^{-1}\bigg[\alpha(|g(x)|)\frac{g(x)}{|g(x)|}\bigg],
  $$
  and for $x\in X$ set $h(x)=x$. Clearly $X$ is the attractor of this homeomorphism, since
  $$
  g[h^k(x)]=\alpha^k(|g(x)|)\frac{g(x)}{|g(x)|},
  $$
  $\alpha^k(r)\to0$ as $k\to\infty$, and $\dist(y,X)\to0$ as $g(y)\to0$.


   This mapping satisfies the requirements of the theorem; the only possible issue is continuity at each $x\in X$. First observe the following: if $y\in g^{-1}(R_k)$ then $y=g^{-1}(r\xi)$ with $2^{-(k+1)}\le r\le 2^{-k}$ and $|\xi|=1$, so
  $$
  |y-h(y)|=|g^{-1}(r\xi)-g^{-1}(\alpha(r)\xi))|\le 2^{-k},
  $$
  since $|r-\alpha(r)|=\beta(r)\le b_k$, using (\ref{hinvsmall}) and the definition of $\beta(\cdot)$. Now fix $x \in X$ and $\epsilon > 0$. Choose $N>0$ such that $2^{-N}<\epsilon/2$ and $0<\delta<\epsilon/2$ so small that $|y-x| < \delta$ implies $y \in X$ or $y \in g^{-1}(R_k)$ for some $k \geq N$. Then if $|y-x| < \delta$
  $$
  |h(x)-h(y)|\le|x-y|+|y-h(y)|\le \epsilon/2 + 2^{-N} < \epsilon,
  $$
  which shows that $h$ is continuous.

  Finally, pick $x \in \R^n$ with $|x| = r > R$. Then $g(x) = x$ because $g$ is the identity outside $B(0,R)$, and $\beta(|x|) = b_0$ because $R$ was assumed to be bigger than $1$. Thus \[\alpha(|g(x)|)\frac{g(x)}{|g(x)|} = (|x|-b_0)\frac{x}{|x|} = (r-b_0)\frac{x}{|x|}\] has modulus $r-b_0 \geq R - b_0 > R'$, so $g^{-1}$ leaves it fixed. Consequently $h$ transforms the sphere of radius $r$ into the sphere of radius $r - b_0$, and therefore $h(B(0,r)) \subset B(0,r-b_0)$. \end{proof}

Note that the results of Lemma \ref{Xiscellular} and Theorem \ref{Garaysthm} show that cellularity is the characteristic feature of attractors of homeomorphisms in Euclidean spaces.

\section{Global attractors have trivial shape}

Given a finite-dimensional attractor $\AA$ of a homeomorphism on some linear space $E$, our aim is to construct a homeomorphism on some $\R^n$ that has a homeomorphic copy $A$ of $\AA$ as an attractor. We have shown that to be an attractor in $\R^n$ the set $A$ must be cellular, but cellularity is not a topological property, i.e\ a priori there is no way to guarantee that $A$ is cellular, even though $\AA$ is.

In order to circumvent this problem we introduce some more refined topological ideas from the theory of shape due to Borsuk (1975). Here we follow Garay (1991) and relate trivial shape to contractibility properties (the equivalence of Borsuk's original definition with that given here follows from Borsuk (1967) and Hyman (1969)).

\begin{definition} Let $A$ be a subset of $B$. A \emph{contraction of $A$ in $B$} is a continuous map \[F : A \times [0,1] \longrightarrow B\] such that $F_0 = {\rm id}_A$ and $F_1 \equiv \text{constant map}$, where $F_t$ means the partial map $F_t : A \longrightarrow B$ given by $F_t(p) := F(p,t)$. If such a contraction exists we say that $A$ is contractible in $B$. If a set $A$ is contractible in itself we simply say that $A$ is \emph{contractible}.
\end{definition}

An easy but important remark is the following:

\begin{remark}\label{contsub}
 If $A$ is contractible in $B$ and $A_0 \subseteq A$, then $A_0$ is also contractible in $B$. A suitable contraction may be obtained by restricting a contraction of $A$ in $B$.
\end{remark}

As a simple but useful example, we note that any ball $B$ in a linear space is contractible, since there is an obvious contraction onto its centre. Namely, if $B = B(x_0,r)$, then \[F(x,t) := x_0 + (x-x_0)(1-t)\] provides a contraction of $B$ in itself.

The following is an extremely trivial proposition, but nevertheless we include it here for comparison purposes with Proposition \ref{prop:trivial_top} below.

\begin{proposition}\label{contract_topo} If $h : A \longrightarrow A'$ is a homeomorphism and $A$ is contractible, then so is $A'$. That is, ``being contractible'' is a topological property.
\end{proposition}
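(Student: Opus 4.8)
The plan is to simply transport the contraction across the homeomorphism by conjugation. Suppose $F : A \times [0,1] \longrightarrow A$ is a contraction of $A$ in itself, so that $F_0 = {\rm id}_A$ and $F_1 \equiv c$ for some constant $c \in A$. I would define a candidate contraction $G : A' \times [0,1] \longrightarrow A'$ of $A'$ in itself by
\[
G(p,t) := h\bigl(F(h^{-1}(p),t)\bigr),
\]
that is, $G_t = h \circ F_t \circ h^{-1}$ for each $t \in [0,1]$.

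The verification then breaks into three routine checks. First, $G$ is continuous, since it is a composite of the continuous maps $h^{-1} \times {\rm id}_{[0,1]}$, $F$, and $h$ (this is the only place the hypothesis that $h$ is a \emph{homeomorphism}, rather than merely continuous, is used: we need $h^{-1}$ continuous). Second, $G_0 = h \circ {\rm id}_A \circ h^{-1} = {\rm id}_{A'}$. Third, $G_1(p) = h\bigl(F(h^{-1}(p),1)\bigr) = h(c)$ for all $p \in A'$, so $G_1$ is the constant map with value $h(c) \in A'$. Hence $G$ is a contraction of $A'$ in itself, and $A'$ is contractible.

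There is no real obstacle here; the statement is included (as the authors note) only as a foil for the more substantial Proposition \ref{prop:trivial_top}, where the ambient space is not preserved by the homeomorphism and one cannot merely conjugate. The one point worth stating explicitly is that symmetry of the argument — applying the same construction to $h^{-1}$ — shows contractibility is genuinely a topological \emph{equivalence} invariant, not just preserved in one direction.
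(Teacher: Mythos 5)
Your proof is correct and is exactly the paper's argument: the paper defines the contraction of $A'$ as $h \circ F \circ (h^{-1} \times {\rm id}_{[0,1]})$, which is precisely your $G_t = h \circ F_t \circ h^{-1}$. The verification steps you spell out are routine and all valid.
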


\begin{proof} Let $F : A \times [0,1] \longrightarrow A$ be a contraction. Then \[h \circ F \circ (h^{-1} \times {\rm id}_{[0,1]}) : A' \times [0,1] \longrightarrow A'\] is a contraction of $A'$.
\end{proof}

Now we introduce the property we are interested in, which is weaker than being contractible.

\begin{definition} Let $X$ be a compact subset of a linear space $E$. We say that $X$ has \emph{trivial shape} if for every neighbourhood $U$ of $X$ in $E$, $X$ is contractible in $U$.
\end{definition}

Observe that we do not require the existence of a contraction of $X$ in itself, but that there exist contractions of $X$ that take place in arbitrarily small neighbourhoods of $X$ in $E$. We now show that cellular sets have trivial shape.

\begin{lemma}
  If $E$ is a linear space and $X\subset E$ is strongly cellular, then $X$ has trivial shape.
\end{lemma}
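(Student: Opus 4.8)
The goal is to show that a strongly cellular set $X \subset E$ has trivial shape, i.e.\ that for every neighbourhood $U$ of $X$ in $E$, the set $X$ is contractible in $U$.

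The plan is to use the definition of strong cellularity directly. Write $X = \bigcap_{j=1}^\infty C_j$, where the $C_j$ are cells with $C_{j+1} \subset \Int(C_j)$, and recall that strong cellularity means that given any open $U \supset X$ there is some index $j$ with $C_j \subset U$. So fix a neighbourhood $U$ of $X$ and pick such a $j$. Now $C_j$ is a cell, hence homeomorphic to a closed ball, hence contractible (as noted in the excerpt for balls, together with Proposition~\ref{contract_topo} which says contractibility is a topological property). Therefore $C_j$ is contractible in itself, and since $X \subseteq C_j \subseteq U$, Remark~\ref{contsub} tells us that $X$ is contractible in $C_j$, and a fortiori contractible in $U$ (the target of the contraction may be enlarged from $C_j$ to $U$ without destroying continuity). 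Since $U$ was arbitrary, $X$ has trivial shape.

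More explicitly: let $G : C_j \times [0,1] \to C_j$ be a contraction of $C_j$ onto a point (obtained by transporting the standard contraction of a ball through the homeomorphism $C_j \cong \overline{B}$). Its restriction $G|_{X \times [0,1]} : X \times [0,1] \to C_j$ satisfies $G_0 = \id_X$ and $G_1 \equiv \text{const}$, so composing with the inclusion $C_j \hookrightarrow U$ gives a contraction of $X$ in $U$. This is exactly the content of Remark~\ref{contsub} applied with $A = C_j$, $A_0 = X$, and $B = C_j$ (or $B = U$).

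There is essentially no obstacle here: the proof is a direct unwinding of the definitions, and the only substantive inputs — that cells are contractible and that subsets inherit contractibility in an ambient space — have already been recorded in the excerpt. The one point worth stating carefully is the role of \emph{strong} cellularity rather than plain cellularity: it is precisely the strong cellularity hypothesis that lets us trap $X$ inside a single cell $C_j$ contained in the prescribed neighbourhood $U$, which is what makes the contraction take place in an arbitrarily small neighbourhood of $X$.
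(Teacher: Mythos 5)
Your proof is correct and follows essentially the same route as the paper's: use strong cellularity to trap $X$ in a cell $C_j\subset U$, note that cells are contractible since balls are and contractibility is a topological property, and then restrict the contraction of $C_j$ to $X$ (Remark~\ref{contsub}) to contract $X$ within $U$. Your closing remark on why \emph{strong} cellularity (rather than mere cellularity) is what is needed is also exactly the right observation.
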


\begin{proof}
Since $X$ is strongly cellular, given any neighbourhood $U$ of $X$ we can find a cell $C$ such that $X\subset C\subset U$. Any ball is contractible, therefore (Proposition \ref{contract_topo}) any cell is contractible. Therefore (Remark \ref{contsub}) any subset of a cell is contractible within that cell. Thus $X$ is contractible within $C$, and so within $U$. It follows that $X$ has trivial shape.
\end{proof}

The following corollary, an immediate consequence of this result and Lemma \ref{Xiscellular}, gives one indication why this definition is potentially interesting.

\begin{corollary}\label{Xistrivial}
If $E$ is a linear space and $\AA$ is the global attractor of a homeomorphism $F:E\rightarrow E$, then $\AA$ has trivial shape.
\end{corollary}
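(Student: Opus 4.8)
The plan is to simply chain together the two preceding results, since the corollary is advertised as immediate. First I would invoke Lemma~\ref{Xiscellular}: because $\AA$ is the global attractor of the homeomorphism $F:E\to E$, that lemma tells us $\AA$ is strongly cellular in $E$. Then I would apply the lemma immediately preceding this corollary, which asserts that every strongly cellular subset of a linear space has trivial shape. Composing these two implications gives that $\AA$ has trivial shape, which is exactly the assertion.

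In slightly more detail, unwinding the definitions makes the mechanism transparent. Strong cellularity of $\AA$ means that for every neighbourhood $U$ of $\AA$ there is a cell $C$ — a homeomorph of the closed unit ball of $E$ — with $\AA\subset C\subset U$. The closed ball is contractible via the obvious straight-line homotopy to its centre, so Proposition~\ref{contract_topo} shows $C$ is contractible, and Remark~\ref{contsub} then produces a contraction of $\AA$ taking place inside $C$, hence inside $U$. Since $U$ was an arbitrary neighbourhood of $\AA$, this says precisely that $\AA$ is contractible in arbitrarily small neighbourhoods, i.e.\ that it has trivial shape.

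There is essentially no obstacle at this point: all the genuine work has already been done, partly in Lemma~\ref{Xiscellular} (which only needs that some iterate of the homeomorphism maps a large closed ball strictly inside itself, together with the fact that powers of a homeomorphism are homeomorphisms) and partly in the cellular-implies-trivial-shape lemma. The one thing worth flagging is purely expository: trivial shape is being established here for attractors of honest homeomorphisms on the original space $E$, whereas the payoff in the later sections is that trivial shape — unlike cellularity — is a topological invariant, so it will survive transport to a homeomorphic copy of $\AA$ inside some $\R^k$. That observation, however, belongs to the subsequent development rather than to the proof of this corollary.
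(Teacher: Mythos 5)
Your proposal is correct and matches the paper exactly: the corollary is stated there as an immediate consequence of Lemma~\ref{Xiscellular} together with the preceding lemma that strongly cellular sets have trivial shape, which is precisely the chain you describe.
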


However, unlike cellularity, having trivial shape is a topological property.

\begin{proposition} \label{prop:trivial_top} Let $h : X \longrightarrow X'$ be a homeomorphism between two compact sets $X$ and $X'$ contained in linear spaces $E$ and $E'$. Then $X$ has trivial shape if, and only if, $X'$ has trivial shape.
\end{proposition}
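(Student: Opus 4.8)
The plan is to show that trivial shape, although defined via neighbourhoods in the ambient linear space, depends only on the homeomorphism type of the compact set. The situation is symmetric: interchanging $(X,E)$ with $(X',E')$ and replacing $h$ by $h^{-1}$ turns one implication into the other, so it suffices to prove that if $X$ has trivial shape then so does $X'$.

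The only non-formal ingredient is an extension theorem. Since $X$ is compact, hence closed, in $E$, and $h:X\to X'\subset E'$ is a continuous map into the linear space $E'$, Dugundji's extension theorem (Dugundji, 1951) supplies a continuous map $G:E\to E'$ with $G|_X=h$ (one may even arrange $G(E)$ to lie in the convex hull of $X'$, but this is not needed). In particular $G(X)=X'$.

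With $G$ in hand the rest is a short diagram chase. Fix an arbitrary neighbourhood $U'$ of $X'$ in $E'$; I must produce a contraction of $X'$ in $U'$. By continuity of $G$ and $G(X)=X'\subset U'$, the set $W:=G^{-1}(U')$ is a neighbourhood of $X$ in $E$. Since $X$ has trivial shape there is a contraction $F:X\times[0,1]\to W$ with $F_0=\id_X$ and $F_1$ constant. I would then set
\[
\widetilde{F}\;:=\;G\circ F\circ\bigl(h^{-1}\times\id_{[0,1]}\bigr)\;:\;X'\times[0,1]\longrightarrow E'.
\]
Its image lies in $G(W)\subset U'$. At $t=0$ we have $\widetilde{F}_0(p)=G(h^{-1}(p))=h(h^{-1}(p))=p$, using $h^{-1}(p)\in X$ and $G|_X=h$, so $\widetilde{F}_0=\id_{X'}$; at $t=1$, $\widetilde{F}_1$ is the constant map with value $G$ of the constant value of $F_1$. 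Hence $\widetilde{F}$ is a contraction of $X'$ in $U'$. Since $U'$ was arbitrary, $X'$ has trivial shape, and the converse follows by symmetry.

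There is essentially no obstacle here beyond recognising that Dugundji's theorem is exactly what allows one to transport contractions across the homeomorphism: the rest is the formal observation that a contraction of $X$ taking place inside the open set $G^{-1}(U')$ is pushed forward by $G$ to a contraction of $X'$ inside $U'$. If one preferred not to invoke Dugundji in full generality, the same argument goes through with only a continuous extension of $h$ to a neighbourhood of $X$ in $E$; in the finite-dimensional setting relevant to this paper that is just the Tietze extension theorem applied to each coordinate.
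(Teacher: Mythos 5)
Your argument is correct and is essentially the paper's own proof: extend $h$ continuously to the ambient space, pull back the neighbourhood $U'$, and push a contraction of $X$ forward via $\widehat{h}\circ F\circ(h^{-1}\times\mathrm{id})$. If anything, your citation of Dugundji's extension theorem is the more careful choice for maps into a general linear space (the paper simply invokes Tietze), but the substance is identical.
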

\begin{proof} Let $\widehat{h} : E \longrightarrow E'$ be a continuous extension of $h$ (this exists by Tietze's theorem). We assume that $X$ has trivial shape.

Let $U'$ be a neighbourhood of $X'$ in $E'$ and consider $U := \widehat{h}^{-1}U'$, which is a neighbourhood of $X$ in $E$. Since $X$ has trivial shape, it is contractible in $U$; let $F : X \times [0,1] \longrightarrow U = \widehat{h}^{-1}U'$ be a contraction. Then \[\widehat{h} \circ F \circ (h^{-1} \times {\rm id}_{[0,1]}) : X' \times [0,1] \longrightarrow U\] is a contraction of $X'$ in $U'$.
\end{proof}

One may wonder whether global attractors are actually contractible. The answer is, in general, negative. A quick way to prove this is to observe that a contractible set must be path connected, and then construct examples where global attractors exist which are not path connected. This can be done even in the plane.

The following result shows that attractors with trivial shape also arise in more general situations. A semiflow $S(\cdot):E\to E$ is a family of maps $\{S(t):\ t\ge0\}$ such that $S(0)$ is the identity map and $S(t+s)=S(t)S(s)$ for all $t,s\ge0$.

\begin{proposition} \label{prop:att_trivial2}
  If $\AA$ is the global attractor of a semiflow $S(\cdot)$ on a linear space $E$ then $\AA$ has trivial shape.
\end{proposition}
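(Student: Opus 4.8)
The plan is to reduce the semiflow case to the (already proven) discrete case of Corollary \ref{Xistrivial} by exhibiting $\AA$ as the global attractor of a single homeomorphism acting on a linear space. First I would fix any $t_0>0$ and consider the time-$t_0$ map $F:=S(t_0):E\to E$. This is continuous and, since $S(\cdot)$ has global attractor $\AA$, one checks easily that $F$ also has $\AA$ as its global attractor: $\AA$ is compact, $F(\AA)=S(t_0)\AA=\AA$ because $\AA$ is invariant under the whole semiflow, and if $B$ is bounded then $\dist(F^n(B),\AA)=\dist(S(nt_0)(B),\AA)\to 0$ as $n\to\infty$. So $\AA$ is the global attractor of the continuous map $F$.

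The obstacle is that Corollary \ref{Xistrivial} requires $F$ to be a \emph{homeomorphism}, whereas $S(t_0)$ need only be continuous (and need not even be injective) on all of $E$. The standard way around this is the Mañé-type trick of passing to a space of trajectories. The plan is to set
\[
\wt E:=\Big\{\gamma:[0,\infty)\to E\ :\ \gamma\ \text{continuous},\ \gamma(t+s)=S(t)\gamma(s)\ \text{for all}\ t,s\ge0\Big\},
\]
the space of complete (forward) orbits of the semiflow, equipped with, say, the topology of uniform convergence on compact sets (which makes $\wt E$ a linear space, or at least a closed linear subspace of $C([0,\infty),E)$ — note a continuous orbit through a point of $\AA$ is determined by its values and all translates lie in $\AA$, so on the relevant part everything is tame). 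On $\wt E$ the shift $\wt F(\gamma)(s):=\gamma(s+t_0)$ is a homeomorphism: it is continuous, and it is invertible because a forward orbit can be shifted \emph{backwards} by $t_0$ using the semigroup property together with the fact that orbits are required to be defined for all forward time — the inverse is $\wt F^{-1}(\gamma)(s)=\gamma(s)$ for $s\ge t_0$ and $=S(s)\gamma(0)$ for... more precisely, $\wt F^{-1}(\gamma)$ is the unique orbit $\eta$ with $\eta(s+t_0)=\gamma(s)$, which exists and is unique in $\wt E$. One then shows $\wt F$ has a global attractor $\wt\AA$ on $\wt E$ (the natural lift of $\AA$: orbits lying entirely in $\AA$), using that $\AA$ attracts bounded sets in $E$ and that the evaluation map $\gamma\mapsto\gamma(0)$ is a homeomorphism of $\wt\AA$ onto $\AA$ — this last point is where injectivity-type information on $\AA$ enters, and on $\AA$ the semiflow is invertible precisely because $\AA$ consists of complete bounded orbits, which are unique.

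Finally I would invoke Corollary \ref{Xistrivial}: since $\wt\AA$ is the global attractor of the homeomorphism $\wt F$ on the linear space $\wt E$, it has trivial shape; and since having trivial shape is a topological property (Proposition \ref{prop:trivial_top}) and $\wt\AA$ is homeomorphic to $\AA$ via evaluation at $0$, the set $\AA$ has trivial shape as well. The main obstacle, and the step requiring the most care, is setting up the trajectory space $\wt E$ correctly — verifying that the shift is genuinely a homeomorphism there, that $\wt E$ can legitimately be taken to be a linear space (or that Corollary \ref{Xistrivial} is robust enough to apply in a closed linear subspace of a function space), and that $\wt F$ really does admit $\wt\AA$ as a global attractor attracting all bounded sets of $\wt E$. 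Everything else is a routine transport of properties along the homeomorphism $\gamma\mapsto\gamma(0)$.
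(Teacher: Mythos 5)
There is a genuine gap at the heart of your construction: the shift on the space $\wt E$ of \emph{forward} orbits is not a homeomorphism. Since every $\gamma\in\wt E$ satisfies $\gamma(s)=S(s)\gamma(0)$, the space $\wt E$ is just $E$ in disguise (via evaluation at $0$) and $\wt F$ is conjugate to $S(t_0)$ itself; inverting $\wt F$ amounts to finding, for each $x=\gamma(0)$, a point $y$ with $S(t_0)y=x$, which need neither exist (if $S(t_0)$ is not surjective) nor be unique (if $S(t_0)$ is not injective). This is precisely the obstruction the paper flags just before its proof (``there is no reason why $S(T)$ should be a homeomorphism''), and your hedged formula for $\wt F^{-1}$ visibly breaks down on $0\le s<t_0$. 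Passing instead to \emph{complete} two-sided orbits does make the shift invertible, but then the trajectory space is no longer a linear space, bounded subsets of $E$ do not lift to it, and Corollary \ref{Xistrivial} --- whose proof via Lemma \ref{Xiscellular} uses balls of the ambient linear space as cells and attraction of bounded sets --- no longer applies. A further problem: the evaluation map from the set of complete orbits in $\AA$ onto $\AA$ is injective only if the semiflow is injective on $\AA$, i.e.\ only under a backward-uniqueness hypothesis that Proposition \ref{prop:att_trivial2} does not make (the paper invokes injectivity on $\AA$ only later, as an extra assumption for the main theorem). So the claimed homeomorphism $\wt\AA\simeq\AA$ needed to transport trivial shape via Proposition \ref{prop:trivial_top} is also unjustified.

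The paper's own proof sidesteps all of this with a short direct argument that needs no invertibility anywhere: given a neighbourhood $U$ of $\AA$, contract $\AA$ linearly to a point $q\in\AA$ inside $E$ (this contraction may leave $U$); its image $C$ is compact, so $S(T)(C)\subset U$ for some large $T$ because $\AA$ attracts compact sets; the pushed-forward homotopy $S(T)\circ G$ then contracts $S(T)(\AA)=\AA$ to $S(T)(q)$ inside $U$, and the initial discrepancy (the homotopy starts at $S(T)(p)$ rather than $p$) is repaired by prepending the path $t\mapsto S(2Tt)(p)$, which stays in $\AA\subset U$. If you want to rescue a reduction to the discrete case you would need a genuinely different device; as written, the key step of your argument fails.
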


The result appears in Garay (1991), but our proof is much simpler. Note that this result is not a simple consequence of applying Corollary \ref{Xistrivial} to the map $S(T)$ for some fixed $T>0$, since there is no reason why $S(T)$ should be a homeomorphism.

\begin{proof} Let $U$ be a neighbourhood of $\AA$. We need to show that $\AA$ is contractible in $U$; that is, there exist a continuous map $F : \AA \times [0,1] \longrightarrow U$ and a point $* \in U$ such that $F(p,0) = p$ and $F(p,1) = *$ for every $p \in \AA$.

Choose any $q \in \AA$ and let $G :\AA \times [0,1] \longrightarrow E$ be defined as $G(p) := q + (1-t)(p-q)$. Clearly $G$ is a continuous map such that $G(p,0) = p$ and $G(p,1) = q$ for every $p \in \AA$. Since $\AA\times [0,1]$ is compact and $G$ is continuous, its image $C := G(\AA \times [0,1])$ is compact. Thus there exists $T > 0$ such that $S(t)(C) \subseteq U$ for every $t \geq T$, because $\AA$ attracts compact subsets of $E$.

Let $H : \AA \times [0,1] \longrightarrow U$ be the composition $H := S(T) \circ G$ (notice that the range of $H$ is now $U$). Denote $* := S(T)(q) \in \AA \subseteq U$. $H$ is clearly continuous; it also satisfies $H(p,0) = S(T)(p)$ and $H(p,1) = *$ for every $p \in \AA$. Thus it is almost a contraction of $\AA$ in $U$, the only issue being the fact that $H(p,0) = S(T)(p)$ rather than $H(p,0) = p$. However, this is easy to fix, as follows. Let $F :\AA \times [0,1] \longrightarrow U$ be defined as
$$
F(p,t) := \begin{cases} S(2Tt)(p) & \text{if } 0 \leq t \leq 1/2, \\ H(p,2t-1) & \text{if } 1/2 \leq t \leq 1. \end{cases}
$$
It is straightforward to check that $F$ is continuous and satisfies the required properties $F(p,0) = p$ and $F(p,1) = *$ for every $p \in \AA$.
\end{proof}

We note that in many interesting examples one can show that, at least on the attractor, the semigroup is injective. In this case, it follows that for any time $t>0$, the time-$t$ map $S(t)$ is a homeomorphism. What follows is therefore applicable to both the global attractors of homeomorphisms, and to the time $t$ map on any global attractor of a semigroup.

\section{Trivial shape and cellularity in Euclidean spaces}

Now suppose that we begin with a set $\AA$ that is the attractor of a homeomorphism (or a semiflow) on a linear space. We therefore know from Corollary \ref{Xistrivial} that $\AA$ has trivial shape. If $\AA$ is finite-dimensional, then we can use Theorem \ref{MN} to find an embedding $e:\AA\to\R^n$ for some $n$. Since trivial shape is a topological property (Proposition \ref{prop:trivial_top}), it follows that $e(\AA)\subset\R^n$ has trivial shape.

However, in order to make $e(\AA)$ the attractor of a homeomorphism on $\R^n$ it must be cellular. We can obtain a cellular set by appealing to the following result due essentially to McMillan (1964), but given in precisely the form we require in Daverman (1986, III.18, Corollary 5A). This gives cellularity of $X_0\times\{0\}$ in $\R^{n+1}$ whenever $X_0$ has trivial shape.


\begin{theorem}[McMillan--Daverman]\label{McMc}
If $X_0$ is a compact subset of $\R^n$ that has trivial shape then
then $X_0\times\{0\}\subset\R^{n+1}$ is cellular.
\end{theorem}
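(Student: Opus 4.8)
The plan is to realise $X_0 \times \{0\}$ as the intersection of a nested sequence of $(n+1)$-cells in $\R^{n+1}$, built by ``thickening'' a suitable sequence of neighbourhoods of $X_0$ in $\R^n$ using the extra coordinate. Fix a decreasing sequence of bounded open sets $U_1 \supset U_2 \supset \cdots$ in $\R^n$ with $\overline{U_{j+1}} \subset U_j$ and $\bigcap_j U_j = X_0$ (possible since $X_0$ is compact). Because $X_0$ has trivial shape, for each $j$ there is a contraction $G^{(j)} : X_0 \times [0,1] \to U_j$ of $X_0$ inside $U_j$; by compactness of $X_0 \times [0,1]$ the image of $G^{(j)}$ is a compact subset of $U_j$, so after shrinking we may assume the contraction takes place in a slightly smaller open set whose closure still lies in $U_j$. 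The idea is now to attach to a neighbourhood $V_j$ of $X_0$ (with $\overline{V_j}$ inside the region where the contraction lives) a cap in the $(n+1)$st direction whose ``lid'' is collapsed along the contraction, yielding a set $C_j \subset \R^{n+1}$ that is homeomorphic to an $(n+1)$-ball, contains $X_0 \times \{0\}$ in its interior, and shrinks down to $X_0 \times \{0\}$ as $j \to \infty$.

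The key steps, in order, are: (1) Record the standard fact that $X_0$ compact with trivial shape admits a neighbourhood basis of compact sets in each of which $X_0$ is contractible; combine this with a nested exhaustion to get the sequences $U_j$ and contractions $G^{(j)}$ above. (2) For a single stage, given a compact neighbourhood $N$ of $X_0$ in $\R^n$, a smaller compact neighbourhood $N'$ with $X_0 \subset \Int N' \subset N' \subset \Int N$, and a contraction $G$ of $X_0$ in $\Int N'$, build a cell $C \subset \R^{n+1}$ with $X_0 \times \{0\} \subset \Int C \subset C \subset \Int(N \times (-1,1))$: take $C$ to be the union of $N' \times [-1,0]$ (a genuine $(n+1)$-cell since $N'$, being a compact $n$-manifold-with-boundary neighbourhood chosen as a polyhedral or smooth regular neighbourhood, is an $n$-cell — or, more carefully, one replaces $N'$ by an honest PL $n$-cell sitting between $X_0$ and $N$, which exists because $\R^n$ is an $n$-manifold) with the mapping cylinder of the contraction $G_1 : X_0 \to \{pt\}$ realised as a subset of $N \times [0,1]$ via $(p,t) \mapsto (G(p,t), t)$. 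One must check this union is an $(n+1)$-cell: it is the cell $N' \times [-1,0]$ with a collar-like cap glued on along $X_0 \times \{0\}$, and collapsing the cap radially shows it is homeomorphic to $N' \times [-1,0]$ itself. This is the heart of McMillan's cellularity criterion and is the step I expect to be the main obstacle — getting an actual homeomorphism onto $B^{n+1}$ rather than just a set with the right shape requires either a careful direct construction of the gluing homeomorphism or an appeal to the characterisation of cells via collapsibility/regular neighbourhood theory. (3) Iterate: apply step (2) with $N = U_j$, $N'$ chosen inside $U_{j+1}$ (shrinking further so that the cell $C_{j+1}$ produced at the next stage lands inside $\Int C_j$), obtaining a decreasing sequence of $(n+1)$-cells $C_j$ with $C_{j+1} \subset \Int C_j$. (4) Verify $\bigcap_j C_j = X_0 \times \{0\}$: the inclusion $\supseteq$ is clear from the construction; for $\subseteq$, note $C_j \subset U_j \times (-\delta_j, \delta_j)$ for a sequence $\delta_j \to 0$, and $\bigcap_j U_j = X_0$, so any point in the intersection has both its $\R^n$-component in $X_0$ and its last coordinate equal to $0$.

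I would present the argument by first isolating step (2) as the construction of a single ``shape-collar cell,'' since once that lemma is in hand the remaining assembly is the routine nested-intersection bookkeeping of step (4). The delicate point throughout is that ``trivial shape'' only gives contractions into neighbourhoods, never into $X_0$ itself, so the extra dimension is genuinely needed: it provides room to build the mapping cylinder of each contraction as an embedded cap, turning the approximate contractibility of $X_0$ into honest cellularity of $X_0 \times \{0\}$.
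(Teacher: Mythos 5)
The paper does not prove this theorem: it is quoted verbatim from Daverman (1986, III.18, Corollary 5A), going back to McMillan (1964), precisely because the proof is a substantial piece of geometric topology. Your attempt to prove it directly has a genuine gap at the step you yourself flag as the main obstacle, and the gap is fatal rather than technical.

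The set $C = \bigl(N' \times [-1,0]\bigr) \cup \{(G(p,t),t) : p \in X_0,\ t \in [0,1]\}$ is not an $(n+1)$-cell, even in the most trivial case. Take $X_0$ to be a single point: then the ``cap'' is an arc attached at one endpoint to an interior point of the top face of the cell $N' \times [-1,0]$, and a ball with a whisker is not homeomorphic to $B^{n+1}$ (interior points of the whisker have arbitrarily small neighbourhoods homeomorphic to an interval, which no point of $B^{n+1}$ has for $n+1 \geq 2$). Your proposed justification --- ``collapsing the cap radially shows it is homeomorphic to $N' \times [-1,0]$'' --- only exhibits a cell-like quotient map $C \to N' \times [-1,0]$; concluding that the domain and target are homeomorphic requires the decomposition to be shrinkable, which is exactly the deep machinery (the cell-like approximation theorem) this lemma is meant to feed into, and here it is simply false since the two spaces are not homeomorphic. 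Two further problems compound this. First, the map $(p,t) \mapsto (G(p,t),t)$ need not be injective (a contraction is not an isotopy), so the cap need not even be a copy of the cone on $X_0$. Second, and independently, $C$ fails to contain $X_0 \times \{0\}$ in its interior: the only points of $C$ with positive last coordinate lie on the thin cap, which has empty interior in $\R^{n+1}$, so the nesting $C_{j+1} \subset \Int(C_j)$ demanded by the definition of cellularity can never be arranged with these sets --- there is no room inside $C_j$ above the hyperplane to receive the next cap.

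The repair is not cosmetic. One must replace the cap by a genuine $(n+1)$-dimensional neighbourhood of the trace of the contraction, and the whole content of McMillan's theorem is that such a neighbourhood can be taken to be a cell: one engulfs the trace of the contraction in a collapsible polyhedron and invokes the fact that a regular neighbourhood of a collapsible polyhedron in the interior of a PL manifold is a cell. That argument needs ambient dimension at least $5$ (so it covers $n+1 \geq 5$ here), and the low-dimensional cases of the statement require separate treatment. Your intuition about why the extra dimension matters is sound, but as written the construction proves nothing; if you want a self-contained treatment you should either reproduce McMillan's engulfing argument or, as the paper does, cite Daverman.
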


\begin{corollary}\label{embed_cell} Let $\AA$ be the global attractor of a homeomorphism or a semiflow on a linear space $E$, and assume ${\rm dim}(\AA)\le k$. Then there exists a homeomorphism $e : \AA \rightarrow X \subset \R^{2k+2}$ such that $X$ is cellular in $\R^{2k+2}$.
\end{corollary}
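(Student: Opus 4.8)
The plan is to assemble the pieces already in place: trivial shape of the attractor, the Menger--N\"obeling embedding theorem, the topological invariance of trivial shape, and the McMillan--Daverman theorem. First I would recall that, whether $\AA$ arises as the global attractor of a homeomorphism or of a semiflow, it has trivial shape: in the homeomorphism case this is Corollary \ref{Xistrivial}, and in the semiflow case this is Proposition \ref{prop:att_trivial2}. Since $\dim(\AA)\le k$, Theorem \ref{MN} (Menger--N\"obeling) provides a homeomorphism $e_0:\AA\to X_0\subset\R^{2k+1}$ onto its image; in fact the set of such embeddings is dense in $C^0(\AA,\R^{2k+1})$, but we only need the existence of one.

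Next I would invoke the topological invariance of trivial shape. By Proposition \ref{prop:trivial_top}, since $\AA$ has trivial shape and $e_0:\AA\to X_0$ is a homeomorphism between compact subsets of linear spaces, the set $X_0\subset\R^{2k+1}$ also has trivial shape. Now Theorem \ref{McMc} (McMillan--Daverman), applied with $n=2k+1$, tells us that $X_0\times\{0\}\subset\R^{2k+2}$ is cellular. I would then set $X:=X_0\times\{0\}\subset\R^{2k+2}$ and define $e:\AA\to X$ by $e(a):=(e_0(a),0)$; this is a homeomorphism onto $X$ because it is the composition of $e_0$ with the obvious homeomorphism $X_0\to X_0\times\{0\}$. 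This gives exactly the statement, with the ambient dimension $2k+2 = (2k+1)+1$ accounting for the extra coordinate needed by the McMillan--Daverman construction.

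There is no real obstacle here: the corollary is a bookkeeping consequence of the four results above, and the only point that warrants a word of care is the dimension count. The Menger--N\"obeling theorem costs $2k+1$ dimensions for a space of topological dimension $\le k$, and McMillan--Daverman then requires one further dimension to upgrade ``trivial shape'' to ``cellular'', so the embedding lands in $\R^{2k+2}$ rather than $\R^{2k+1}$. (One should remember that cellularity is genuinely not a topological property, which is precisely why the naive route --- embed $\AA$ into $\R^{2k+1}$ and hope the image is cellular because $\AA$ is strongly cellular in $E$ --- fails, and why the detour through trivial shape and the extra dimension is unavoidable with these tools.)
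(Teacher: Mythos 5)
Your proposal is correct and follows exactly the same route as the paper: Menger--N\"obeling gives $e_0:\AA\to X_0\subset\R^{2k+1}$, trivial shape of $\AA$ (Corollary \ref{Xistrivial} or Proposition \ref{prop:att_trivial2}) transfers to $X_0$ by Proposition \ref{prop:trivial_top}, and McMillan--Daverman makes $X_0\times\{0\}$ cellular in $\R^{2k+2}$. No gaps; the remark on why the extra dimension is needed matches the paper's motivation as well.
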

\begin{proof} By Theorem \ref{MN} there exists a homeomorphism $e : \AA \rightarrow X_0 \subset \R^{2k+1}$. By identifying $\R^{2k+1}$ with $\R^{2k+1} \times \{0\} \subset \R^{2k+2}$ we may think of $e$ as a homeomorphism onto the subset $X := X_0 \times \{0\} \subset \R^{2k+2}$. Proposition \ref{Xistrivial} or Proposition \ref{prop:att_trivial2} guarantee that the set $\AA$ has trivial shape, and the same is true of $X_0$ by Proposition \ref{prop:trivial_top}. Then Theorem \ref{McMc} implies that $X$ is cellular in $\R^{2k+2}$.
\end{proof}

  \section{Extension of homeomorphisms from compact subsets of $\R^n$}

  With the above results we can guarantee that the attractor $\AA\subset E$ has trivial shape and find a homeomorphism $e:\AA\to X\subset\R^{2k+2}$ such that $X$ is cellular. Note that the dynamics on $\AA$ is reproduced on $X$ by means of the homeomorphism $f=e\circ F\circ e^{-1}$ (the dynamics on $\AA$ and $X$ are conjugated by $e$).

  We now need to extend the homeomorphism $f:X\to X$ to a homeomorphism on the whole of $\R^{2k+2}$. Since we have the freedom to increase the dimension of the ambient space (we have done this once already), we can use the elegant trick to due to Klee (1955, statement (3.3)), as outlined in Proposition \ref{prop:Klee}, below.

  We make use of the following elementary results on extension of continuous functions, where $B(0,R)$ denotes the open ball of radius $R$.

  \begin{lemma}\label{Stein}
    Let $X\subset B(0,R)\subset\R^n$ be a compact set and $f:X\rightarrow X$ be a continuous function. Then there exists an extension $\varphi : \mathbb{R}^n \rightarrow \mathbb{R}^n$ of $f$ such that $\varphi$ is the identity outside $B(0,R)$.
  \end{lemma}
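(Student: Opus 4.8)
The plan is to build $\varphi$ by combining three standard ingredients: Tietze extension of $f$, a Tietze extension of the (known) fact that $f$ maps into $B(0,R)$, and a radial cut-off that forces the map to be the identity outside the ball. First I would invoke Tietze's extension theorem (applied coordinatewise) to obtain a continuous map $g : \R^n \to \R^n$ with $g|_X = f$. This $g$ need not take values in $B(0,R)$, nor need it be the identity far out, so it requires correction. Since $f(X) \subset X \subset B(0,R)$ and $X$ is compact, there is some $0 < R_0 < R$ with $f(X) \subset \overline{B}(0,R_0)$; by continuity of $g$ and compactness of $X$ there is an open neighbourhood $V$ of $X$ on which $|g| \le R_0 < R$. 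Using Urysohn's lemma pick a continuous $\lambda : \R^n \to [0,1]$ that equals $1$ on $X$ and equals $0$ outside $V$, and replace $g$ by $x \mapsto \lambda(x) g(x)$; this new map still agrees with $f$ on $X$, is continuous, and now has modulus $< R$ everywhere (being a convex combination of $g(x)$ and $0$ on $V$, and $0$ outside $V$). Relabel this map $g$.

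Next I would splice $g$ together with the identity across the sphere of radius $R$. Choose a continuous $\mu : [0,\infty) \to [0,1]$ with $\mu(s) = 1$ for $s \le R_1$ and $\mu(s) = 0$ for $s \ge R$, where $R_0 < R_1 < R$ is chosen so that $X \subset B(0,R_1)$, and set
$$
\varphi(x) := \mu(|x|)\, g(x) + \bigl(1 - \mu(|x|)\bigr)\, x.
$$
This $\varphi$ is continuous on all of $\R^n$, equals $g(x) = f(x)$ for $x \in X$ (since $|x| \le R_1$ there forces $\mu(|x|)=1$), and equals $x$ for $|x| \ge R$, hence is the identity outside $B(0,R)$ as required. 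One should check that $\varphi$ indeed maps $\R^n$ to $\R^n$ — which is automatic — and that no continuity is lost at $|x| = R$, which holds because $g$ and the identity are glued with a continuous weight $\mu$. This completes the construction.

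The statement only asserts that $\varphi$ is a continuous \emph{extension} that is the identity outside $B(0,R)$; it does not claim $\varphi$ is a homeomorphism (which would be false in general, since $f$ itself need not be injective on $X$), so there is no injectivity obstacle to worry about. The main point requiring care is the interplay of the two radii: one needs the neighbourhood $V$ on which $|g|<R$ to contain $X$ and one needs the cut-off radius $R_1$ to exceed the modulus of $g$ on $X$ while staying below $R$, so that the splicing region $R_1 \le |x| \le R$ does not meet $X$ and the value of $\varphi$ on $X$ is genuinely $f$. Arranging these inequalities is elementary but is the only place where the hypothesis $X \subset B(0,R)$ is actually used. Everything else is a routine application of Tietze and Urysohn.
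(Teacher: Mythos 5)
Your proof is correct, and the lemma as stated (a continuous extension, not a homeomorphism, with no constraint on the image) asks for nothing more than what you deliver. The paper's own argument is a shade more economical: it applies Tietze a single time to the map on $X \cup \partial B(0,R)$ that equals $f$ on $X$ and the identity on the sphere, obtaining a continuous extension to $\overline{B}(0,R)$, and then declares $\varphi(x)=x$ outside the ball; the identity boundary condition is thus built into the data before extending, rather than enforced afterwards by your radial convex interpolation $\mu(|x|)\,g(x)+(1-\mu(|x|))\,x$. Both routes are routine and both work (your Urysohn step has a tiny slip --- continuity gives $|g|<R_0+\delta$ on a neighbourhood of $X$, not $|g|\le R_0$, but taking $V=\{|g|<R\}$ fixes this instantly). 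One genuine point in favour of your version: because you first arrange $|g|<R$ everywhere, your $\varphi$ satisfies $|\varphi(x)|\le\max(|x|,R)$, hence $\varphi(B(0,r))\subseteq B(0,r)$ for all $r\ge R$. That bound is not part of the statement, but it is exactly what is used when the lemma is invoked in Proposition~\ref{prop:Klee} (the step asserting $|\varphi(x)|\le r$); a bare Tietze extension on the closed ball, taken coordinatewise, only lands in a coordinate box and can leave $\overline{B}(0,R)$, so the paper's construction would need to be followed by the radial retraction onto $\overline{B}(0,R)$ (which fixes $X$ and the sphere, so this is harmless). Your extra cut-off, though superfluous for the lemma as literally stated, supplies that property for free.
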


  \begin{proof}
    Let $D$ denote the boundary of $B(0,R)$ and extend $f$ to $X \cup D$ by letting it be the identity on $D$. Considering this as a map from $X \cup D$ into $\R^n$, the Tietze extension theorem can be used to obtain a continuous $\varphi : \overline{B}(0,R) \longrightarrow \R^n$ such that $\varphi|_X = f$ and $\varphi|_D = {\rm id}$. It only remains to set $\varphi(x) := x$ for $x \not\in B(0,R)$.
  \end{proof}

  The second simple lemma will be crucial in obtaining a controlled extension in the subsequent theorem.

    \begin{lemma}\label{compress}
  	Let $X \subset B(0,R) \subset \R^n$ be a compact set. There exists a homeomorphism $c : \mathbb{R}^n \longrightarrow \mathbb{R}^n$ such that $c|_X = {\rm id}_X$ and $c(B(0,r)) \subseteq B(0,\nicefrac{r}{2})$ for every $r \geq 2R$.
  \end{lemma}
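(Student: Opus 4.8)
The plan is to build $c$ as a radial modification of $\mathbb{R}^n$ that fixes a neighbourhood of $0$ large enough to contain $X$, and compresses everything of modulus $\ge 2R$ by a factor of $2$, interpolating continuously (and monotonically) in between. Concretely, since $X\subset B(0,R)$ and $X$ is compact, choose $R'<R$ with $X\subset \overline{B}(0,R')$. I would then pick a homeomorphism $\lambda:[0,\infty)\to[0,\infty)$ that is the identity on $[0,R']$, satisfies $\lambda(r)=r/2$ for $r\ge 2R$, and is strictly increasing throughout (such a $\lambda$ exists: on $[R',2R]$ join the point $(R',R')$ to $(2R,R)$ by any strictly increasing continuous graph lying strictly below the diagonal for $r>R'$ — for instance piecewise linear). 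Define $c(x)=\lambda(|x|)\,x/|x|$ for $x\ne 0$ and $c(0)=0$.

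The key steps, in order, are: (1) verify $c$ is well-defined and continuous at $0$ — this follows because $\lambda(r)\to 0$ as $r\to 0$, indeed $\lambda(r)=r$ near $0$, so $|c(x)|=\lambda(|x|)\le |x|$ small; (2) verify $c$ is a homeomorphism — its inverse is $c^{-1}(x)=\lambda^{-1}(|x|)\,x/|x|$ (and $c^{-1}(0)=0$), which is continuous by the same reasoning since $\lambda^{-1}$ is also a strictly increasing homeomorphism of $[0,\infty)$ that is the identity near $0$; (3) verify $c|_X={\rm id}_X$ — if $x\in X\subset \overline{B}(0,R')$ then $|x|\le R'$, so $\lambda(|x|)=|x|$ and hence $c(x)=x$; (4) verify the compression property — for $r\ge 2R$ and $|x|\le r$ we have $|c(x)|=\lambda(|x|)\le \lambda(r)=r/2$ using monotonicity of $\lambda$, which gives $c(B(0,r))\subseteq B(0,\nicefrac{r}{2})$. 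This same kind of radial-rescaling construction already appears in the proofs of Lemma~\ref{lem:brown} and Theorem~\ref{Garaysthm}, so I would keep the verification brief and refer to that pattern.

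I do not expect a genuine obstacle here; the only mild care needed is the continuity of $c$ and $c^{-1}$ at the origin, which is handled by insisting that $\lambda$ be the identity near $0$ (rather than merely fixing $0$), exactly as the authors do for the map $g_1$ in Lemma~\ref{lem:brown}. One should also note that the choice of $R'$ strictly between $X$ and $R$ is what gives room to make $\lambda$ strictly increasing while already coinciding with the identity on all of $X$; without strict inequality one could still argue, but this way the construction is cleanest. Everything else is routine.
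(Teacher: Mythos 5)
Your proposal is correct and follows essentially the same route as the paper: a radial map $c(x)=\lambda(|x|)x/|x|$ built from a strictly increasing piecewise-linear profile that is the identity on a slightly smaller ball $\overline{B}(0,R')\supset X$ and equals $r/2$ beyond $2R$, with continuity at the origin secured by $\lambda$ being the identity near $0$. The only cosmetic difference is that you verify the compression property via monotonicity of $\lambda$ in one line, whereas the paper splits into the cases $|p|\le 2R$ and $|p|>2R$; both are fine.
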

  \begin{proof}
  	Choose $R^* < R$ be so close to $R$ that $X \subseteq B(0,R^*)$. Let $\theta : [0,+\infty) \longrightarrow [0,+\infty)$ be the (unique) continuous map that is linear on each of the intervals $[0,R^*]$, $[R^*,2R]$ and $[2R,+\infty)$ and such that (i) $\theta(r) = r$ for $0 \leq r \leq R^*$, (ii) $\theta(r) = \nicefrac{r}{2}$ for $r \geq 2R$. Notice that $\theta$ is strictly increasing and surjective, hence a homeomorphism.
  	
  	Define $c : \mathbb{R}^n \longrightarrow \mathbb{R}^n$ by \[c(p) := \left\{\begin{array}{ll} 0 & \text{ for } p = 0 \\ p \frac{\theta(|p|)}{|p|} & \text{ for } p \neq 0 \end{array} \right.\] Clearly $c$ is continuous except at, possibly, $p = 0$. However for $p \in B(0,R^*)$ we have $c(p) = p$, so $c$ is continuous at $p=0$ too. This also shows that $c|_X = {\rm id}_X$. It is easy to check that $c^{-1}$ is given by \[c^{-1}(p) = \left\{\begin{array}{ll} 0 & \text{ for } p = 0 \\ p \frac{\theta^{-1}(|p|)}{|p|} & \text{ for } p \neq 0 \end{array} \right.\] which is continuous for the same reason as above, so $c$ is a homeomorphism.
  	
  	It only remains to show that $c(B(0,r)) \subseteq B(0,\nicefrac{r}{2})$ for every $r \geq 2R$. Thus, let $r \geq 2R$ and pick $p \in B(0,r)$. If $|p| \leq 2R$, then $\theta(|p|) \leq R$ and so $c(p) \in B(0,R) \subseteq B(0,\nicefrac{r}{2})$. If $|p| > 2R$ then $\theta(p) = \nicefrac{|p|}{2} < \nicefrac{r}{2}$, so $c(p) \in B(0,\nicefrac{r}{2})$.
 	\end{proof}

\begin{proposition}\label{prop:Klee}
    Let $X\subset B(0,R)\subset\R^n$ be a compact set and $f:X\rightarrow X$ a homeomorphism. Then there exists a homeomorphism $\hat f:\R^{2n}\rightarrow\R^{2n}$ such that
    $$
    \hat f(x,0_n)=(f(x),0_n)\qqfa x\in X,
    $$
    where $0_n$ is the origin in $\R^n$, and such that
    $$
    (x,y)\in B(0,r)\times B(0,r)\quad\Rightarrow\quad \hat{f}(x,y)\in B(0,r)\times B(0,r)
    $$
    for any $r\ge R$.
  \end{proposition}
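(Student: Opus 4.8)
The plan is to realise Klee's trick as an explicit composition of homeomorphisms of $\R^{2n}=\R^n\times\R^n$: two ``shears'' that straighten the graph of $f$ onto the standard copy $\R^n\times\{0_n\}$, a coordinate swap, and the compression homeomorphism of Lemma~\ref{compress} inserted after each shear to prevent points from leaving the boxes $B(0,r)\times B(0,r)$.

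First I would fix the ingredients. By Tietze's theorem, extending into the convex set $\bar B(0,R)$ (recall $f(X)=f^{-1}(X)=X\subset B(0,R)$; cf.\ the proof of Lemma~\ref{Stein}), choose continuous $\varphi,\psi:\R^n\to\bar B(0,R)$ with $\varphi|_X=f$ and $\psi|_X=f^{-1}$; then $|\varphi(x)|\le R$ and $|\psi(x)|\le R$ for every $x\in\R^n$. Let $c:\R^n\to\R^n$ be the homeomorphism from Lemma~\ref{compress}, so $c|_X={\rm id}_X$ and $c(B(0,s))\subseteq B(0,s/2)$ for every $s\ge 2R$; I also note that the $c$ constructed there is the identity near the origin, in particular $c(0)=0$. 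Define the shears $\sigma_1(x,y):=(x,\,y+\varphi(x))$ and $\sigma_2(x,y):=(x-\psi(y),\,y)$ --- homeomorphisms of $\R^{2n}$ with continuous inverses $(x,y)\mapsto(x,\,y-\varphi(x))$ and $(x,y)\mapsto(x+\psi(y),\,y)$ --- and let $\tau(x,y):=(y,x)$. Then I set
\[
\hat f\;:=\;\tau\circ(c\times{\rm id})\circ\sigma_2\circ({\rm id}\times c)\circ\sigma_1,
\]
which is a homeomorphism of $\R^{2n}$, being a composition of homeomorphisms.

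Two verifications remain. For the conjugacy I chase $(x,0_n)$ with $x\in X$ through the five maps: $\sigma_1$ sends it to $(x,f(x))$ (as $\varphi|_X=f$); ${\rm id}\times c$ fixes it (as $f(x)\in X$ and $c|_X={\rm id}$); $\sigma_2$ sends it to $(0_n,f(x))$ (as $\psi(f(x))=f^{-1}(f(x))=x$); $c\times{\rm id}$ fixes it (as $c(0)=0$); and $\tau$ sends it to $(f(x),0_n)$. For the control property I fix $r\ge R$ and a point with $|x|,|y|<r$ and follow it: $\sigma_1$ lands it in $B(0,r)\times B(0,2r)$, since $|y+\varphi(x)|\le|y|+|\varphi(x)|<r+R\le 2r$; ${\rm id}\times c$ brings it back into $B(0,r)\times B(0,r)$, since $2r\ge 2R$ forces $c(B(0,2r))\subseteq B(0,r)$; $\sigma_2$ lands it in $B(0,2r)\times B(0,r)$, since $|x-\psi(y)|\le|x|+|\psi(y)|<r+R\le 2r$; $c\times{\rm id}$ brings it back into $B(0,r)\times B(0,r)$, again as $2r\ge 2R$; and $\tau$ preserves $B(0,r)\times B(0,r)$. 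Hence $\hat f\big(B(0,r)\times B(0,r)\big)\subseteq B(0,r)\times B(0,r)$.

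The only real work is the radius bookkeeping in the last step, and the thing to get right is that a single shear can double a coordinate's radius, so a compression that halves balls of radius $\ge 2R$ must be applied \emph{after each} shear, not once at the end. This is also exactly why the extensions $\varphi,\psi$ are taken with range in $\bar B(0,R)$ --- so each shift is bounded by $R\le r$ --- and what the threshold $2R$ in Lemma~\ref{compress} is tuned for. Dropping either interleaved compression leaves only $\hat f\big(B(0,r)\times B(0,r)\big)\subseteq B(0,2r)\times B(0,2r)$, a fixed multiplicative loss the statement does not tolerate.
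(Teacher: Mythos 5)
Your proposal is correct and follows essentially the same route as the paper: Klee's shear trick combined with the compression homeomorphism of Lemma~\ref{compress}, the only differences being that you take the Tietze extensions with values in $\bar B(0,R)$ (rather than equal to the identity outside $B(0,R)$) and interleave two one-sided compressions, whereas the paper builds a factor $\nicefrac{1}{2}$ into the second shear and applies a single two-sided compression $\hat c(x,y)=(c(x),c(y))$ at the very end. Your closing remark that the compression ``must be applied after each shear, not once at the end'' is therefore slightly overstated --- a single final compression acting on both coordinates does recover $B(0,r)\times B(0,r)$ from $B(0,2r)\times B(0,2r)$ since $c(B(0,2r))\subseteq B(0,r)$ --- but this does not affect the validity of your argument.
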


  \begin{proof}
We use Lemma \ref{Stein} to extend the continuous map $f:X\rightarrow\R^n$ to a continuous map $\varphi:\R^n\rightarrow\R^n$ which is the identity outside $B(0,R)$, and again to extend the continuous map $f^{-1}:X\rightarrow\R^n$ to another continuous map $\psi:\R^n\rightarrow\R^n$ which is the identity outside $B(0,R)$.

    Let $f_1,f_2:\R^{2n}\rightarrow\R^{2n}$ be the homeomorphisms defined by
    $$
    f_1(x,y)=(x,y+\varphi(x))\qquad\mbox{and}\qquad f_2(x,y)=(2y+\psi(x),x),
    $$
    where $x,y\in\R^n$. One can check that these are homeomorphisms since their inverses are given explicitly by
    $$
     f_1^{-1}(x,y)=(x,y-\varphi(x))\qquad\mbox{and}\qquad f_2^{-1}(x,y)=(y,\nicefrac{1}{2}(x-\psi(y))).
    $$

    Define $g=f_2^{-1}\circ  f_1$. This is a homeomorphism of $\R^{2n}$ (since it is the composition of homeomorphisms). Notice that for every $x\in X$
    $$
  f_1(x,0)=(x,\varphi(x))=(x,f(x))
    $$
    and
    $$
 f_2(f(x),0)=(\psi(f(x)),f(x)) = (x,f(x)),
    $$ so
    \begin{align*}
    g(x,0)&=(f_2^{-1}\circ f_1)(x,0)\\
    &=f_2^{-1}(x,f(x))\\
    &=(f(x),0).
    \end{align*}
    The argument to this point provides a homeomorphism that extends $f$. We now combine this with the homeomorphism of Lemma \ref{compress} to obtain the controlled extension we require.

 		Let $c : \mathbb{R}^n \longrightarrow \mathbb{R}^n$ be the homeomorphism constructed in Lemma \ref{compress}, which gives rise to another homeomorphism $\hat{c} : \mathbb{R}^{2n} \longrightarrow \mathbb{R}^{2n}$ setting $\hat{c}(x,y) := (c(x),c(y))$. Finally, define $\hat{f} := \hat{c} \circ g$. We claim that $\hat{f}$ satisfies the required properties.
 		
 		Since $\hat{c}|_{X \times \{0\}} = {\rm id}_{X \times \{0\}}$, clearly $\hat{f}(x,0) = (f(x),0)$ for $x \in X$. Now let $r \geq R$ and pick $(x,y) \in B(0,r) \times B(0,r)$. Then $|\varphi(x)| \leq r$, so
    $$
    (x',y')=f_1(x,y)=(x,y+\varphi(x))\in B(0,r)\times B(0,2r),
    $$
    and similarly $|\psi(y')| \leq 2r$, so
    $$
    g(x,y) = f_2^{-1}(x',y')=(y',\nicefrac{1}{2}(x'-\psi(y'))\in B(0,2r)\times B(0,2r),
    $$
    which shows that $g(x,y) \in B(0,2r) \times B(0,2r)$. Now $2r \geq 2R$ so $c(B(0,2r)) \subseteq B(0,r)$, and it follows that $\hat{f}(x,y) = (\hat{c} \circ g)(x,y) \in B(0,r) \times B(0,r)$.
    \end{proof}

  \section{Finite-dimensional dynamics}

  Finally we can combine these ingredients to show that the dynamics on $\AA$ occur within the attractor of a finite-dimensional system.

\begin{theorem}\label{main}
  Let $\AA$ be the attractor of a homeomorphism $F:E\to E$ with ${\rm dim}(\AA)\le k$. For any $\epsilon>0$ there exist homeomorphisms
  $$
  e: \AA\rightarrow A\subset\R^{4k+4}\qq{and} f:\R^{4k+4}\rightarrow\R^{4k+4}
  $$
  such that the dynamics on $\AA$ and $A$ are conjugate under $\varphi$, i.e.
    $$F|_\AA=e^{-1}\circ f\circ e,$$
    and $\{f^n\}$ has an attractor $A_f$ with
    $$
   A\subset A_f\subset N(A,\epsilon),
    $$
    where
    $$
    N(A,\epsilon)=\{y\in\R^{4k+4}:\ \dist(y,A)<\epsilon\}.
    $$
\end{theorem}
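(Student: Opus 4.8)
The plan is to assemble the final theorem by threading together the three main ingredients already established: the Menger--Nöbeling embedding (Theorem~\ref{MN}), the McMillan--Daverman boosting to cellularity (Corollary~\ref{embed_cell}), the controlled homeomorphism extension of Klee-type (Proposition~\ref{prop:Klee}), and Garay's realization of cellular sets as attractors (Theorem~\ref{Garaysthm}). The only genuinely new work is bookkeeping: keeping track of dimensions ($2k+1 \rightsquigarrow 2k+2 \rightsquigarrow 4k+4$) and, crucially, arranging that the attractor $A_f$ of the final homeomorphism is \emph{squeezed} into the $\epsilon$-neighbourhood $N(A,\epsilon)$ rather than being merely some cellular superset of $A$.

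First I would apply Corollary~\ref{embed_cell} to obtain a homeomorphism $e_0:\AA\to X\subset\R^{2k+2}$ with $X$ cellular; conjugating $F$ gives a homeomorphism $f_0 := e_0\circ F\circ e_0^{-1}:X\to X$. After translating and rescaling we may assume $X\subset B(0,R)\subset\R^{2k+2}$ for some convenient $R$. Next, apply Proposition~\ref{prop:Klee} with $n=2k+2$ to get a homeomorphism $\hat f_0:\R^{4k+4}\to\R^{4k+4}$ extending $f_0$ (identifying $X$ with $X\times\{0_{2k+2}\}$) and satisfying the invariance property: $(x,y)\in B(0,r)\times B(0,r)$ implies $\hat f_0(x,y)\in B(0,r)\times B(0,r)$ for all $r\ge R$. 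Set $e := (x\mapsto(e_0(x),0))$, so $e:\AA\to A:= X\times\{0\}\subset\R^{4k+4}$ is an embedding with $F|_\AA = e^{-1}\circ\hat f_0\circ e$ — the conjugacy is done. I would also note that the image $X\times\{0\}$ is cellular in $\R^{4k+4}$: it is cellular in $\R^{2k+2}$, hence has trivial shape there, hence (Proposition~\ref{prop:trivial_top}, then repeated application of Theorem~\ref{McMc}, or directly since a product of a cell with a ball is a cell) $A$ is cellular in $\R^{4k+4}$.

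The remaining task is to turn $\hat f_0$ into a homeomorphism $f$ whose attractor lies in $N(A,\epsilon)$, without disturbing the dynamics on $A$. Here I would invoke Theorem~\ref{Garaysthm} applied to the cellular set $A\subset\R^{4k+4}$: it furnishes a homeomorphism $h:\R^{4k+4}\to\R^{4k+4}$ that is the identity on $A$ and has $A$ as its global attractor, with the quantitative feature $h(B(0,r))\subset B(0,r-\rho)$ for $r\ge R'$ (some $\rho>0$, $R'$ the radius of a ball containing $A$). The key point is that since $h$ fixes $A$ pointwise and contracts a large ball toward $A$, a sufficiently high iterate $h^N$ maps any fixed large ball $\bar B(0,\rho_0)$ into $N(A,\epsilon)$. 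I then define $f := h^N\circ\hat f_0$. On $A$: $\hat f_0$ preserves $A$ and $h^N$ is the identity on $A$, so $f|_A = \hat f_0|_A$, preserving the conjugacy $F|_\AA = e^{-1}\circ f\circ e$. To see $f$ has an attractor inside $N(A,\epsilon)$: choose $\rho_0\ge R$ large. For $(x,y)\in B(0,\rho_0)\times B(0,\rho_0)$, the invariance property of $\hat f_0$ keeps the image in $B(0,\rho_0)\times B(0,\rho_0)$ — which is contained in $\bar B(0,\sqrt2\,\rho_0)$, so after enlarging $\rho_0$ if needed and using the contraction, $h^N$ pulls it into $N(A,\epsilon)$. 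Thus $K := \overline{N(A,\epsilon)}$ (intersected with a large ball, or $K := \overline{f(B(0,\rho_0)^2)}$) is a compact set with $f(K)\subset \mathrm{int}(K)$, i.e.\ a compact attracting set; by the Theorem on attractors in linear spaces, $f$ has a global attractor $A_f = \bigcap_j f^j(K)$, and $A_f\subset K\subset N(A,\epsilon)$. Finally $A\subset A_f$: since $A$ is invariant under $f$ (as $f|_A$ is a homeomorphism of $A$ onto $A$, because $\hat f_0|_A = f_0$ is), $A = f^j(A)\subset f^j(K)$ for every $j$, so $A\subset A_f$.

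I expect the main obstacle to be purely technical: arranging the radii so that the three quantitative estimates (the $B(0,r)^2$-invariance from Proposition~\ref{prop:Klee}, the $\sqrt 2$ loss in passing between a product of balls and a single ball, and the contraction rate $\rho$ from Theorem~\ref{Garaysthm}) chain together to land everything inside $N(A,\epsilon)$ after finitely many iterates. One must pick $\rho_0$ first (large enough that $B(0,\rho_0)^2\supset$ a neighbourhood capturing the relevant dynamics and $\rho_0\ge R'$), then pick $N$ with $h^N(\bar B(0,\sqrt2\,\rho_0))\subset N(A,\epsilon)$, which is possible because $h^N(\bar B(0,\sqrt 2\rho_0))$ shrinks toward the attractor $A$ of $h$ and eventually enters any prescribed neighbourhood of it. No step requires new ideas beyond careful ordering of the quantifiers.
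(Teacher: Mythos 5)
Your overall architecture matches the paper's up to one structural choice, and that choice is where the gap lies. The paper does \emph{not} apply Theorem~\ref{Garaysthm} to the cellular set $A\subset\R^{4k+4}$; it applies it to $X_1\subset\R^{2k+2}$ to get $h:\R^{2k+2}\to\R^{2k+2}$ with $h(B(0,r))\subset B(0,r-\rho)$, and then squeezes in $\R^{4k+4}$ with the \emph{product} homeomorphism $\hat h(x,y)=(h(x),y/2)$, finally setting $f=\hat h^m\circ\hat f_1$. The point is that $\hat h$ maps products of balls to products of balls, $\hat h^m(B(0,r)\times B(0,r))\subset B(0,\max(r-m\rho,R))\times B(0,r/2^m)$, which is exactly compatible with the product-of-balls invariance supplied by Proposition~\ref{prop:Klee}; the resulting one-step radius estimate $\phi(r)=\max(r-m\rho,\,r/2^m,\,R)$ satisfies $\phi^n(r)=R$ for $n$ large, so every bounded set is eventually absorbed into $B(0,R)\times B(0,R)$ and the attractor is global.

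Your version, $f=h^N\circ\hat f_0$ with $h$ the Garay homeomorphism of $\R^{4k+4}$, breaks this chaining. Proposition~\ref{prop:Klee} controls $\hat f_0$ only on products of balls $B(0,r)\times B(0,r)$, while Theorem~\ref{Garaysthm} controls $h$ only on Euclidean balls of $\R^{4k+4}$; converting the former into the latter costs the factor $\sqrt2$ you yourself flag. The one-step estimate becomes $r\mapsto\max(\sqrt2\,r-N\rho,\,R')$, and since $\sqrt2\,r-N\rho>r$ as soon as $r>N\rho/(\sqrt2-1)$, the iterated bound diverges for large initial radius. No ordering of the quantifiers fixes this: $N$ is fixed once $f$ is defined, while the bounded sets to be attracted are arbitrary. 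So your argument does yield an attractor $A_f\subset N(A,\epsilon)$ that attracts a neighbourhood of itself, and the rest of the proposal is sound (the cellularity of $X\times\{0\}$ in $\R^{4k+4}$, the conjugacy, and $A\subset A_f$ all go through), but it does not show that $A_f$ attracts \emph{all} bounded sets, which is what the paper's notion of attractor requires and what its proof establishes. To repair it you need a squeezing map compatible with the product structure --- the paper's $\hat h(x,y)=(h(x),y/2)$ --- or an additional multiplicative compression in the outer region (in the spirit of Lemma~\ref{compress}) so that the contraction beats the $\sqrt2$ loss.
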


In other words, $A\simeq\AA$ is `almost' the attractor of $f$; and the dynamics of $F|_\AA$ are no more complicated than those of $f$.

  \begin{proof} Corollary \ref{embed_cell} provides a homeomorphism $e:\AA\to X_1\subset\R^{2k+2}$, where $X_1$ is cellular in $\R^{2k+2}$. Using Theorem \ref{Garaysthm} we may find a homeomorphism $h : \R^{2k+2} \rightarrow \R^{2k+2}$ such that $X_1$ is the global attractor of $h$ and $h|_{X_1} = {\rm Id}$. Choose $R$ such that $X_1 \subset B(0,R)$. Then we can assume there is a constant $\rho > 0$ such that $h(B(0,r)) \subset B(0,r-\rho)$ for $r \geq R$ and, in particular, $h^m(B(0,r))\subseteq B(0,\max(r-m\rho,R))$ for every $m$.


The map $f_1:X_1\to X_1$ defined by $f_1(x)=e \circ F\circ e^{-1}(x)$ is a homeomorphism. We use Proposition \ref{prop:Klee} to produce a homeomorphism $\hat{f}_1:\R^{4k+4}\to\R^{4k+4}$ such that
$$
\hat{f}_1(x,0)=(f_1(x),0)\qq{for all}x\in X_1,
$$
i.e.\ $\hat{f}_1$ extends $f_1$ from the set $A:=X_1\times\{0\}^{2k+2}$ to all of $\R^{4k+4}$. We can choose $\hat{f}_1$ so that
   $$
    (x,y)\in B(0,r)\times B(0,r)\quad\Rightarrow\quad \hat{f}_1(x,y)\in B(0,r)\times B(0,r)
    $$
    for any $r\ge R$.

Consider the homeomorphism $\hat{h} : \R^{4k+4} \rightarrow \R^{4k+4}$ given by $\hat{h}(x,y) := (h(x),y/2)$ for $x,y \in \R^{2k+2}$. Clearly $A$ is a global attractor for $\hat{h}$ and $\hat{h}|_A = {\rm Id}$. Reduce $\epsilon$, if necessary, so that $N(A,\epsilon) \subset B(0,R) \times B(0,R)$ and choose $m \geq 1$ big enough so that $\hat{h}^m(B(0,R) \times B(0,R)) \subset N(A,\epsilon)$. Set $f := \hat{h}^m \circ \hat{f}_1$. We claim that $f$ satisfies the required properties.

It is clear that $f|_A=f_1|_X$, since $\hat{h}$ is the identity on $X$. Denote $B := B(0,R) \times B(0,R)$ for brevity. Then \[f(N(A,\epsilon)) \subseteq f(B) = \hat{h}^m \hat{f}_1(B) \subseteq \hat{h}^m(B) \subseteq N(A,\epsilon),\] which shows that $N(A,\epsilon)$ is positively invariant under $f$. Consequently $f$ has an attractor $A_f$ contained in $N(A,\epsilon)$. Since $A$ is invariant, it is clear that $A \subseteq A_f$, and it only remains to show that $A_f$ is a global attractor. Since $f(B) \subset N(A,\epsilon)$, it suffices to show that for every bounded set $C$ there exists an iterate $n$ such that $f^n(C) \subset B$.

Consider the mapping $\phi(r) := \max(r-m\rho,\nicefrac{r}{2^m},R)$. We claim that \[f(B(0,r) \times B(0,r)) \subset B(0,\phi(r)) \times B(0,\phi(r)).\] This is easily seen to be true for $r \leq R$ because \[f(B(0,R) \times B(0,R)) \subseteq B(0,R) \times B(0,R),\] while for $r > R$ we have \[\hat{f}_1(B(0,r) \times B(0,r)) \subset B(0,r) \times B(0,r)\] and then \[\hat{h}^m\hat f_1(B(0,r) \times B(0,r)) \subset B(0,\max(r-m\rho,R)) \times B(0,\nicefrac{r}{2^m}) \subseteq B(0,\phi(r)) \times B(0,\phi(r)).\]

Let $C$ be a bounded subset of $\R^{4k+4}$ and choose $r$ such that $C \subset B(0,r) \times B(0,r)$. It is very easy to observe that $\phi^n(r) = R$ for $n$ big enough. Thus \[f^n(C) \subset f^n(B(0,r) \times B(0,r)) \subset B(0,\phi^n(r)) \times B(0,\phi^n(r)) = B(0,R) \times B(0,R),\] as required. \end{proof}

  \section{Conclusion and open problems}

Theorem \ref{main} shows that the dynamics on a finite-dimensional attractor of a homoeomorphism is no more complicated than the dynamics that can arise in a finite-dimensional system.

However, it is natural to conjecture that it should in fact be possible to construct a homeomorphism $f:\R^{4k+4}\to\R^{4k+4}$ such that in Theorem \ref{main} in fact $A_f=A$, i.e.\ finite-dimensional attractors of homeomorphisms can always be realised, along with their dynamics, as attractors in finite-dimensional spaces. Even if one allows the finite-dimensional map to be continuous rather than a homeomorphism, to our knowledge this problem is still open.

We can reformulate this problem in a more topological way. Suppose that $X \subset B(0,R)$ is a cellular subset of $\R^n$ written as the intersection of a decreasing sequence of cells $C_j$,
$$
X=\bigcap_{j=1}^\infty C_j,
$$
where we assume without loss of generality that $C_1 \subset B(0,R)$. Also, let a homeomorphism $f : X \to X$ be given. Proposition \ref{prop:Klee} is a controlled extension result: it provides a homeomorphism $\hat{f} : \R^{2n} \to \R^{2n}$ that extends $f$ (in the sense that $\hat{f}(x,0_n) = f(x)$ for every $x \in X$) and such that \begin{equation} \label{klee} \hat{f}(B(0,r) \times B(0,r)) \subset B(0,r) \times B(0,r)\end{equation} for every $r \geq R$. We are now going to show that if the extension $\hat{f}$ can also be controlled near $X$ (and not only for $r \geq R$), then the answer to the question above is in the affirmative.

\begin{proposition} Suppose that Proposition \ref{prop:Klee} could be strengthened so that, in addition to \eqref{klee}, the relation \begin{equation} \label{respects} \hat{f}(C_j \times B(0,r)) \subset C_j \times B(0,r)\end{equation} held for every $R \geq r \geq 0$ and every $j$. Then in Theorem \ref{main} we could achieve $A_f = A$.
\end{proposition}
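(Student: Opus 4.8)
\noindent The plan is to re-run the proof of Theorem \ref{main}, but to use \eqref{respects} to force the compressing homeomorphism to respect a nested family of cells that collapses onto $X\times\{0\}$, so that no part of a bounded set can remain outside it. First I would fix a decomposition $X=\bigcap_j C_j$ as in the statement (with $C_1\subset B(0,R)$), set $r_j:=R\,2^{-j}$ and
\[
D_j:=C_j\times\overline B(0,r_j)\subset\R^{2n}.
\]
A product of two cells is a cell, $D_{j+1}\subset{\rm int}(C_j)\times B(0,r_j)={\rm int}(D_j)$, and $\bigcap_j D_j=\big(\bigcap_j C_j\big)\times\{0\}=:A$, so $\{D_j\}$ is a cellular decomposition of $A$ in $\R^{2n}$. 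Taking $r=r_j\le R$ in the strengthened \eqref{respects} (and letting $r\downarrow r_j$), the extension $\hat f:\R^{2n}\to\R^{2n}$ of $f$ from $A$ would satisfy $\hat f(B(0,r)\times B(0,r))\subset B(0,r)\times B(0,r)$ for $r\ge R$ and, crucially, $\hat f(D_j)\subseteq D_j$ for every $j$.

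Next I would produce the compressing homeomorphism on the first factor, \emph{adapted to} the filtration $\{C_j\}$. Applying Theorem \ref{Garaysthm} to $X$ with this very decomposition gives $h:\R^n\to\R^n$ with $h|_X={\rm id}$, $h(B(0,r))\subset B(0,r-\rho)$ for $r\ge R$, and $X$ the global attractor of $h$; I would further arrange that $h(C_j)\subseteq{\rm int}(C_{j+1})$ for every $j$. Indeed, writing $h=g^{-1}\circ R_\alpha\circ g$ where $g$ is the map of Lemma \ref{lem:brown} (so $g(C_j)\subseteq\overline B(0,\delta_j)$ with $\delta_j\downarrow 0$, and $g^{-1}(0)=X$) and $R_\alpha(z)=\alpha(|z|)\,z/|z|$ is the radial contraction: the compacta $g^{-1}(\overline B(0,\varepsilon))$ decrease to $X$ as $\varepsilon\downarrow 0$, so for each $j$ there is $\varepsilon_{j+1}>0$ with $g^{-1}(\overline B(0,\varepsilon_{j+1}))\subseteq{\rm int}(C_{j+1})$; since $\delta_j\downarrow 0$ one can choose the profile $\alpha$ (strictly increasing, $\alpha(0)=0$, $\alpha(r)<r$, $\alpha(r)\to\infty$, $\alpha(r)=r-\rho$ for $r\ge R$) so that also $\alpha(\delta_j)<\varepsilon_{j+1}$ for all $j$, whence $h(C_j)\subseteq g^{-1}(\overline B(0,\alpha(\delta_j)))\subseteq{\rm int}(C_{j+1})$. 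I would then put $\hat h(x,y):=(h(x),y/2)$ on $\R^{2n}=\R^n\times\R^n$ and fix $m\ge 2$ large enough that also $h^m(\overline B(0,R))\subseteq{\rm int}(C_1)$ (possible since $X\subset{\rm int}(C_1)$ is the attractor of $h$). Since $h^m(C_j)\subseteq{\rm int}(C_{j+m})\subseteq{\rm int}(C_{j+1})$ and $r_j/2^m<r_{j+1}$, one gets $\hat h^m(D_j)=h^m(C_j)\times\overline B(0,r_j/2^m)\subseteq{\rm int}(D_{j+1})$. The candidate map is $\Phi:=\hat h^m\circ\hat f$, the homeomorphism of $\R^{2n}$ that will play the role of ``$f$'' in Theorem \ref{main}.

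Finally I would check that $A$ is the \emph{exact} global attractor of $\Phi$. On $A$: $\Phi(x,0)=\hat h^m(f(x),0)=(h^m(f(x)),0)=(f(x),0)$ since $h$ fixes $X\supseteq f(X)$, so $\Phi|_A$ is conjugate to $f$ (hence to $F|_\AA$) and $\Phi(A)=h^m(X)\times\{0\}=A$, i.e.\ $A$ is compact and invariant. From $\hat f(D_j)\subseteq D_j$ and $\hat h^m(D_j)\subseteq{\rm int}(D_{j+1})$ we get $\Phi(D_j)\subseteq{\rm int}(D_{j+1})$, hence $\Phi^j(D_1)\subseteq D_{j+1}$ and $\bigcap_j\Phi^j(D_1)=\bigcap_j D_{j+1}=A$. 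Moreover $\overline B(0,R)\times\overline B(0,R)$ is $\hat f$-invariant, so $\Phi(\overline B(0,R)^2)\subseteq\hat h^m(\overline B(0,R)^2)=h^m(\overline B(0,R))\times\overline B(0,R/2^m)\subseteq{\rm int}(C_1)\times B(0,r_1)\subseteq{\rm int}(D_1)$; and any bounded set is carried into $\overline B(0,R)^2$ by finitely many iterates of $\Phi$, because $\hat f$ never increases the box-radius $\max(|x|,|y|)$ once it is $\ge R$ while $\hat h^m$ strictly decreases it (via $h(B(0,r))\subset B(0,r-\rho)$ on the first factor and $y\mapsto y/2^m$ on the second). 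Consequently $\Phi^n(C)\subseteq D_j$ for $n$ large (with $j=j(n)\to\infty$) for every bounded $C$, so $A$ attracts all bounded sets; being also compact and invariant, $A$ is then the global attractor, $A_\Phi=A$. The ambient dimension never changes, so in Theorem \ref{main} one again lands in $\R^{4k+4}$, now with the sharper conclusion $A_f=A$.

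The step I expect to be the main obstacle is the assertion that $h$ can be chosen with $h(C_j)\subseteq{\rm int}(C_{j+1})$: this is the one point where one must reopen the constructions behind Lemma \ref{lem:brown} and Theorem \ref{Garaysthm} rather than quote them, tuning the radial profile $\alpha$ to contract faster than the (possibly very rapidly shrinking) neighbourhoods $g^{-1}(\overline B(0,\varepsilon))$ of $X$. Everything else is a bookkeeping variant of the proof of Theorem \ref{main}.
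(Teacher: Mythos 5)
Your overall architecture (the product cells $D_j = C_j \times \bar B(0,r_j)$, the map $\Phi = \hat h^m \circ \hat f$, and the identification of the attractor as $\bigcap_n \Phi^n(D_1)$) is sound, but there is a genuine gap at exactly the step you flag: the claim that $h$ can be chosen so that $h(C_j) \subseteq {\rm int}(C_{j+1})$ for a \emph{prescribed} decreasing sequence of cells $C_j$. Your proposed fix --- tune the radial profile $\alpha$ so that $\alpha(\delta_j) < \varepsilon_{j+1}$ --- conflicts with the continuity of $h$ at points of $X$. In the construction of Theorem \ref{Garaysthm}, continuity of $h$ on $X$ is obtained precisely by forcing $\beta(r) = r - \alpha(r)$ to be \emph{small} (at most the modulus $b_k$ dictated by the uniform continuity of $g^{-1}$ on the annulus $R_k$), which yields the estimate $|y-h(y)| \le 2^{-k}$; since $b_k \le 2^{-(k+3)}$ this forces $\alpha(2^{-k}) \ge \tfrac{7}{8}\,2^{-k}$ and hence $\alpha(\delta_j) \ge c\,\delta_j$ for a universal $c>0$. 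But the largest admissible radius $\varepsilon_{j+1}$ with $g^{-1}(\bar B(0,\varepsilon_{j+1})) \subseteq {\rm int}(C_{j+1})$ can be arbitrarily small compared with $\delta_j$, because nothing controls how fast the given cells shrink relative to the sets $g^{-1}(\bar B(0,\varepsilon))$. And if you instead let $\alpha$ contract fast enough to meet your constraint, $h$ need no longer be continuous on $X$: a point $y$ with $g(y) \in R_k$ is sent deep into a thin neighbourhood of $X$, but possibly far from $y$ itself (anywhere within roughly the diameter of $X$), so $h(y)$ need not converge to $x$ as $y \to x \in X$.

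The paper sidesteps this entirely by reversing the logic: rather than adapting $h$ to a given filtration, it keeps the $h$ of Theorem \ref{main} as it is and \emph{defines} the cells by $C_j := h^{j-1}(\bar B(0,R))$, so that $h(C_j) = C_{j+1}$ holds by construction and this decomposition is a perfectly legitimate one to which the hypothetical strengthening of Proposition \ref{prop:Klee} may be applied. With that choice the rest of your bookkeeping collapses to the paper's: $f(C_j \times B(0,R)) \subset C_{j+1} \times B(0,\nicefrac{R}{2})$, hence $A_f = \bigcap_n f^n(B) = \bigcap_n \left(C_{n+1} \times B(0,\nicefrac{R}{2^n})\right) = X_1 \times \{0\} = A$. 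So the conclusion you want is true and your outline is salvageable, but only after replacing the arbitrary cells by the $h$-generated ones; as written, the lemma your argument rests on is not established and, with the construction you cite, cannot be in general.
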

\begin{proof} Let $e$, $X_1$, $B(0,R)$, $h$, $\hat{h}$ and $f_1$ be as in the proof of the theorem. Find $m \geq 1$ such that $h^m(\bar{B}(0,R)) \subset B(0,R)$, so in particular $\hat{h}^m(B) \subset B$ where $B = B(0,R) \times B(0,R)$ as before. Also, for each $j=1,2,\ldots$ let \[C_j = h^{j-1}(\bar{B}(0,R));\] clearly $C_j$ is a decreasing sequence of cells whose intersection is $X_1$. Now pick an extension $\hat{f}_1 : \R^{4k+4} \to \R^{4k+4}$ that satisfies both \eqref{klee} and \eqref{respects}. Define $f = \hat{h}^m \circ \hat{f}_1$. The same argument given in Theorem \ref{main} proves that for any bounded set $C$ there is an iterate $f^n(C) \subset B$, so $f$ has a global attractor $A_f \subset B$. Clearly $A \subseteq A_f$. Moreover, observe that \[\hat{h}^m\hat{f}_1(C_j \times B(0,R)) \subseteq \hat{h}^m(C_j \times B(0,R)) \subseteq C_{j+m} \times B(0,\nicefrac{R}{2^m})\] so in particular \[f(C_j \times B(0,R)) \subset C_{j+1} \times B(0,\nicefrac{R}{2})\] and consequently \[f^n(B) \subset f^n(C_1 \times B(0,R)) \subset C_{n+1} \times B(0,\nicefrac{R}{2^n}).\] Since $A_f = \bigcap_n f^n(B)$, we get \[A_f = \bigcap_{n=1}^{\infty} f^n(B) = \bigcap_{n=1}^{\infty} (C_{n+1} \times B(0,\nicefrac{R}{2^n})) = X_1 \times \{0\} = A,\] as required.
\end{proof}

As remarked in the introduction, the problem for semiflows seems much more difficult and is still entirely open.

  \section{References}

  \parindent0pt\parskip3pt

A.V. Babin \& M.I. Vishik (1992) {\it Attractors of evolution equations}. North-Holland Publishing Co., Amsterdam.

K. Borsuk (1970) A note of the theory of shape of compacta. {\it Fund. Math,} {\bf 67}, 265--278.

K. Borsuk (1975) {\it Theory of shape}. PWN, Warsaw.

M. Brown (1960) A proof of the generalized Schoenflies theorem. {\it Bull. Amer. Math. Soc.} {\bf 66}, 74--76.

V.V. Chepyzhov \& M.I. Vishik (2002) {\it Attractors for equations of mathematical physics}. American Mathematical Society, Providence, RI.

I.D. Chueshov (2002) {\it Introduction to the theory of infinite-dimensional dissipative systems}, University Lectures in Contemporary Mathematics, AKTA, Kharkiv.

H. Crauel (2001) Random point attractors versus random set attractors. {\it J. Lond. Math. Soc.} {\bf 63}, 413--427.

R.J. Daverman (1986) {\it Decompositions of manifolds}. Academic Press Inc., London.

A. Eden, C. Foias, B. Nicolaenko, \& R. Temam (1989) {\it Exponential attractors for dissipative evolution equations}. Wiley, New York.

A. Eden, V. Kalanarov, \& S. Zelik (2011) Counterexamples to the regularity of Mane projections and global attractors, arXiv:1108.0217.

C. Foias, G.R. Sell, \& R. Temam (1988) Inertial manifolds for nonlinear evolution equations. {\it J. Diff. Eq.} {\bf 73}, 309--353.

B.M. Garay (1991) Strong cellularity and global asymptotic stability. {\it Fund. Math.} {\bf 138}, 147-–154.
1991.

B. G\"unther (1995) Construction of differentiable flows with prescribed attractor. {\it Topology Appl.} {\bf 62}, 87-–91.

 B. G\"unther \& J. Segal (1993) Every attractor of a flow on a manifold has the shape of a finite polyhedron. {\it Proc. Amer. Math. Soc} {\bf 119}, 321-–329.

J.K. Hale (1988) {\it Asymptotic behavior of dissipative systems}. American Mathematical Society, Providence, RI.

J. Heinonen (2003) {\it Geometric embeddings of metric spaces}. Report University of Jyv\"askyl\"a Department of Mathematics and Statistics, 90.

W. Hurewicz \& H. Wallman (1941) {\it Dimension theory}. Princeton University Press, Princeton, NJ.

D.M. Hyman (1969) On decreasing sequence of compact absolute retracts. {\it Fund. Math,} {\bf 64}, 91--97.

V.L. Klee (1955) Some topological properties of convex sets. {\it Trans. Amer. Math. Soc.} {\bf 78}, 30-–45.

O.A. Ladyzhenskaya (1991) {\it Attractors for Semigroups and Evolution Equations}. Cambridge University
Press, Cambridge, England.

  D.R. McMillan (1964) A criterion for cellularity in a manifold. {\it Ann. Math.} {\bf 79}, 327--337.

  R.A. McCoy (1973) Cells and cellularity in infinite-dimensional normed linear spaces. {\it Trans. Amer. Math. Soc.} {\bf 176}, 401--410.


K. Menger (1926) \"Uber umfassendste $n$-dimensionale Mengen. {\it Proc. Akad. Wetensch. Amst.} {\bf 29}, 1125--1128.

G. N\"obeling (1931) \"Uber eine $n$-dimensionale Universalmenge im $R^{2n+1}$. {\it Math. Ann.} {\bf 104}, 71--80.

E. Pinto de Moura, J.C. Robinson, \& J.J. S\'anchez-Gabites (2011) Embedding of global attractors and their dynamics {\it Proc. Amer. Math. Soc.} {\bf 10}, 2497--3512.

J.C. Robinson (1999) Global Attractors: Topology and finite-dimensional dynamics. {\it J. Dynam.
Differential Equations} {\bf 11}, 557-–581.

J.C. Robinson (2001) {\it Infinite-dimensional dynamical systems}. Cambridge University Press, Cambridge, England.

J.C. Robinson (2011) {\it Dimensions, embeddings, and attractors}. Cambridge University Press, Cambridge, England.

A.V. Romanov (2000) Finite-dimensional limiting dynamics for dissipative parabolic equations. {\it Sb.
Math.} {\bf 191}, 415-–429.

R. Temam (1988) {\it Infinite-dimensional dynamical systems in mechanics and physics}. Springer, New York.

\end{document}